\theoremstyle{plain}
\newtheorem{theorem}{Theorem}[section]
\newtheorem{prop}[theorem]{Proposition}
\newtheorem{cor}[theorem]{Corollary}
\newtheorem{lemma}[theorem]{Lemma}
\theoremstyle{definition}
\newtheorem{defn}[theorem]{Definition}
\newtheorem{ex}[theorem]{Example}
\newtheorem*{ex*}{Example}
\newcommand\sO{{\mathcal O}}
\newcommand\sK{{\mathcal K}}
\newcommand\sH{{\mathcal H}}
\newcommand\sP{{\mathcal P}}
\newcommand\sF{{\mathcal F}}
\newcommand\sG{{\mathcal G}}
\newcommand\cc{{\mathbb{C}}}
\DeclareMathOperator{\Pic}{Pic}
\DeclarePairedDelimiterX\set[1]\lbrace\rbrace{#1}
\title[Reconstruction theorems in the supported case]
{Reconstruction theorems in the supported case}
\author{Luigi Lombardi}
\address{Department of Mathematics  \\ Università degli Studi di  Milano\\Via Cesare Saldini 50, 20133 Milano, Italy }
 \email{\url{luigi.lombardi@unimi.it}}
\begin{document}

\begin{abstract}
We show that  any equivalence of bounded derived categories of coherent sheaves on a smooth projective complex variety supported in a closed 
algebraic subset preserves
the dimension of the support in  two cases: \emph{(i)} the restriction of the (anti)canonical bundle to the support is ample; \emph{(ii)}
the supports are irreducible and the equivalence sends a skyscraper sheaf of a closed point to a skyscraper sheaf of a closed point. 
Moreover, in the first case the equivalence recovers the set of closed points of the support up to homeomorphism. 

\end{abstract}

\maketitle 

\section{Introduction}

We study instances regarding  the invariance of the dimension of the support
 under exact equivalences of bounded derived categories of coherent sheaves   supported in a closed algebraic subset.

Let $X$ and $Y$ be  smooth projective complex varieties and let $\mathrm{  D}^b(\mathrm{  Coh}(X))$ and \sloppy $\mathrm{  D}^b({\rm Coh}(Y))$ 
be the bounded derived categories of coherent sheaves on $X$ and $Y$, respectively. 
Under certain conditions $\mathrm{  D}^b({\rm Coh}(X))$ reconstructs the underlying variety.
In this direction we recall the celebrated Bondal--Orlov's reconstruction theorem: If the (anti)canonical bundle of $X$ is ample, then 
any exact equivalence $F \colon \mathrm{  D}^b({\rm Coh}(X)) \to \mathrm{  D}^b({\rm Coh}(Y))$ induces an isomorphism of varieties $X\simeq Y$ (\emph{cf}. 
\cite{BO}; see also \cite{F,Ito} for the non-proper case and  \cite{Ba,IM, SS} for the Gorenstein case). 
By weakening the hypotheses one obtains coarser notions of reconstruction. For instance, if the (anti)canonical bundle is big, or 
if the equivalence $F$   sends a skyscraper sheaf of a closed point to a skyscraper sheaf of a closed point, 
then $X$ and $Y$ are $K$-equivalent, and in particular birational 
(\emph{cf}. \cite[Theorem 1.4]{Ka}, \cite[Theorem 1.1]{CL}, \cite[Lemma 7.3]{T} and \cite[Theorem 1.1]{LLZ}).

Given algebraic  closed subsets $Z\subset X$ and $W\subset Y$, in this paper 
we consider the bounded derived categories $\mathrm{  D}_Z^b({\rm Coh}(X))$ and $\mathrm{  D}_W^b({\rm Coh}(Y))$  of complexes of coherent sheaves with 
 cohomologies supported in $Z$ and $W$, respectively.  
By the existence of the Serre functor, it is easy to check that any exact equivalence 
$F \colon \mathrm{  D}_Z^b(\mathrm{  Coh}(X)) \to \mathrm{  D}_W^b(\mathrm{  Coh}(Y))$  
preserves the dimensions of $X$ and 
$Y$ (Corollary \ref{cor:eqdim}). The following theorem implies the invariance of the dimension of the support under a positivity condition
on the canonical bundle $\omega_X$.

\begin{theorem}\label{thm:intr1}
If  $\mathrm{  D}_Z^b({\rm Coh}(X))\simeq \mathrm{  D}_W^b(\mathrm{  Coh}(Y))$ 
 and $\omega_X|_Z$, or $\omega_X^{-1}|_Z$, is ample, then the sets of closed points  $Z(\cc)$ and $W(\cc)$ are homeomorphic. 
\end{theorem}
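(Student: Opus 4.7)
The strategy adapts Bondal--Orlov's classification of point objects to the supported setting. The category $\mathrm{D}_Z^b(\mathrm{Coh}(X))$ inherits a Serre functor $S_Z = (-) \otimes \omega_X[n]$ with $n = \dim X$ from $\mathrm{D}^b(\mathrm{Coh}(X))$, since tensoring by a line bundle preserves the support condition; the analogous statement on $Y$ has shift $[m]$ with $m = \dim Y$, and because $F$ intertwines the Serre functors, Corollary~\ref{cor:eqdim} forces $n = m$. Call $P$ a \emph{point object} if $S(P) \cong P[n]$, $\Hom(P, P[i]) = 0$ for $i < 0$, and $\mathrm{End}(P) \cong \cc$; the skyscrapers $\sO_z$, $z \in Z(\cc)$, satisfy all three conditions.

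The core technical lemma is that every point object in $\mathrm{D}_Z^b(\mathrm{Coh}(X))$ is a shift $\sO_z[k]$ of a skyscraper at a closed point of $Z$. The isomorphism $P \otimes \omega_X \cong P$ descends to $\mathcal{H}^i(P) \otimes \omega_X \cong \mathcal{H}^i(P)$ on each cohomology sheaf; letting $V = \Supp \mathcal{H}^i(P) \subseteq Z$, the restriction $\omega_X|_V$ is ample (after passing to $\omega_X^{-1}|_Z$ in the anti-ample case), so the Hilbert polynomial $k \mapsto \chi(\mathcal{H}^i(P) \otimes \omega_X^k)$ is simultaneously constant and of degree $\dim V$, forcing $\dim V = 0$. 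An adaptation of the classical Bondal--Orlov argument using $\Hom(P, P[i]) = 0$ for $i < 0$ and $\mathrm{End}(P) \cong \cc$ then collapses $P$ to a single cohomology sheaf supported at a single closed point, yielding $P \cong \sO_z[k]$.

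Define $\phi \colon W(\cc) \to Z(\cc)$ by $F^{-1}(\sO_w) \cong \sO_{\phi(w)}[k(w)]$; this is well-defined because $\sO_w$ is a point object on $Y$ and $F^{-1}$ preserves the defining conditions. Injectivity of $\phi$ follows from $F^{-1}$ being a bijection on isomorphism classes, together with the observation that distinct skyscrapers are not shifts of one another. For surjectivity, note that $\{\sO_w[k]\}_{w,k}$ is a spanning class of $\mathrm{D}_W^b(\mathrm{Coh}(Y))$, so $\{\sO_{\phi(w)}[k']\}_{w,k'}$ is a spanning class of $\mathrm{D}_Z^b(\mathrm{Coh}(X))$; any $z_0 \in Z(\cc) \setminus \phi(W(\cc))$ would satisfy $\mathrm{RHom}(\sO_{z_0}, \sO_{\phi(w)}[k']) = 0$ for all $w, k'$, forcing $\sO_{z_0} = 0$, a contradiction.

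Finally, closed subsets of $Z(\cc)$ are precisely the set-theoretic supports of objects in $\mathrm{D}_Z^b(\mathrm{Coh}(X))$ (and similarly for $W$). The chain $\mathrm{RHom}_Y(\sO_w, F(E)) \cong \mathrm{RHom}_X(F^{-1}(\sO_w), E) \cong \mathrm{RHom}_X(\sO_{\phi(w)}, E)[-k(w)]$ shows $w \in \Supp F(E) \iff \phi(w) \in \Supp E$, so $\phi$ exchanges supports, and hence closed sets, in both directions, producing the claimed homeomorphism. The main obstacle lies in the classification step: propagating zero-dimensional support from each cohomology sheaf to the whole complex, and collapsing $P$ to a single shifted skyscraper via the Hom vanishings, is the most delicate ingredient, and is where the ampleness hypothesis on $\omega_X|_Z$ enters crucially.
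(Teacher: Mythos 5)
Your proposal is correct and follows essentially the same route as the paper: the Serre functor on $\mathrm{D}_Z^b(\mathrm{Coh}(X))$ (which the paper writes as $\iota_Z^!(\iota_Z(-)\otimes\omega_X[\dim X])$, naturally isomorphic to your $(-)\otimes\omega_X[\dim X]$), the Hilbert-polynomial argument with the ample $\omega_X|_Z$ to force zero-dimensional support of each cohomology sheaf, the collapse to a single shifted skyscraper via \cite[Lemma 4.5]{H}, and the transfer of supports to obtain bicontinuity. Your phrasing of the bijection (direction $W(\cc)\to Z(\cc)$, spanning-class argument for surjectivity) is a cosmetic variant of the paper's definition of $U(\cc)\subset Z(\cc)$ and the contradiction showing $U(\cc)=Z(\cc)$.
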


Instances of equivalences of type $\mathrm{  D}_Z^b(\mathrm{  Coh}(X))\simeq \mathrm{  D}_W^b(\mathrm{  Coh}(Y))$  as in Theorem \ref{thm:intr1} 
appear in \cite[Lemma 4.6]{T} where it is showed that any equivalence 
$\mathrm{  D}^b(\mathrm{  Coh}(X)) \simeq \mathrm{  D}^b(\mathrm{  Coh}(Y))$ induces  isomorphisms of 
linear series $\big|\omega_X^{\otimes k} \big| \simeq \big |\omega_Y^{\otimes k}  \big |$ (for all
$k\in \mathbb Z$) and  equivalences  
\begin{equation}\label{eq:toda}
\mathrm{  D}_{\mathrm{  Supp}(E)}^b(\mathrm{  Coh}(X)) \;  \simeq \; \mathrm{  D}_{\mathrm{  Supp} ( E')}^b(\mathrm{  Coh}(Y))
\end{equation}
for any pair of corresponding divisors 
$E \in \big |\omega_X^{\otimes k} \big|$  
and $E' \in \big |\omega_Y^{\otimes k} \big |$ (see also Example \eqref{ex:R}).

In the second main theorem of the paper we consider an
equivalence
 $\mathrm{  D}_Z^b(\mathrm{  Coh}(X))\simeq \mathrm{  D}_W^b(\mathrm{  Coh}(Y))$ 
that sends a skyscraper sheaf of a closed point to a skyscraper sheaf of a closed point as in \cite{T}. 
We prove that if the supports are irreducible then   they have
the same dimension. In fact something stronger holds, and 
in general we speculate the invariance of the dimension of the support under an arbitrary equivalence.
 \begin{theorem}\label{thm:intr2}
 Suppose $Z\subset X$ and $W\subset Y$ are algebraic closed irreducible subsets and let
   $F \colon \mathrm{  D}_Z^b(\mathrm{  Coh}(X)) \to  \mathrm{  D}_W^b(\mathrm{  Coh}(Y))$ be an equivalence. 
   If $F$ sends   a skyscraper sheaf  of a closed point in $Z$ to a skyscraper sheaf  of a closed point in $W$, then  there exist Zariski open dense subsets $U\subset Z$
and $V\subset W$ such that the sets of closed points  $U(\cc)$ and $V(\cc)$ are homeomorphic.  
Moreover, if  $\dim W=1$, then the sets of closed points $Z(\cc)$ and $W(\cc)$ are homeomorphic.
 \end{theorem}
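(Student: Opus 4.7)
My plan is to build a set-theoretic map $f \colon U(\cc) \to W(\cc)$ on a Zariski open dense subset $U \subset Z$ by sending $z$ to the closed point supporting $F(k(z))$, and then to show that $f$ is a homeomorphism onto a Zariski open dense $V \subset W$. For the assignment to make sense, $F(k(z))$ must be a shifted skyscraper for every $z \in U$, which is the content of the main technical step.

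\textbf{The technical heart of the argument} is the openness of the locus
\[
U := \{\, z \in Z(\cc) \mid F(k(z)) \simeq k(w)[n]\ \text{for some}\ w \in W(\cc),\ n \in \mathbb{Z}\,\}.
\]
Since $z_0 \in U$ by hypothesis and $Z$ is irreducible, openness of $U$ would force its density. To access openness I would first write $F$ as an integral transform with kernel $P \in D^b(X \times Y)$ supported in $Z \times W$, invoking Orlov-type representability adapted to the supported setting, e.g.\ through DG enhancements and results of Lunts--Orlov or Ballard. With such a kernel, $F(k(z)) \simeq L\iota_z^* P$; upper semi-continuity of the cohomology dimensions $h^i(L\iota_z^* P)$, together with constancy of the Euler characteristic $\chi = 1$ known at $z_0$, will force the $h^i$ to agree with those of $k(w_0)$ on a dense open, so $F(k(z))$ is a length-one coherent sheaf supported at a single closed point for $z \in U$. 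The hardest point will be setting up the kernel $P$ and justifying the semi-continuity on the possibly singular $Z$.

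\textbf{From openness to homeomorphism.} The map $f$ then arises from an algebraic family (the family of supports of $F(k(z))$), hence is continuous in both the Zariski and classical topologies. Applying the same procedure to $F^{-1}$ yields a continuous $g \colon V'(\cc) \to Z(\cc)$ on a dense open $V' \subset W$. After shrinking $U$ and $V \subset V'$ so that $f(U) \subset V$ and $g(V) \subset U$, the identities $g \circ f = \mathrm{id}_U$ and $f \circ g = \mathrm{id}_V$ follow from $F^{-1}F = \mathrm{id}$ applied to skyscrapers, yielding the desired homeomorphism.

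\textbf{The curve case.} If $\dim W = 1$, the first part forces $\dim Z = 1$ as well, so that both $Z$ and $W$ are irreducible curves. The isomorphism $f \colon U \to V$ is a birational correspondence between $Z$ and $W$, which after passing to normalizations $\widetilde Z$ and $\widetilde W$ becomes an isomorphism of smooth projective curves. To lift this back to a homeomorphism $Z(\cc) \simeq W(\cc)$ I would match the finitely many points in $Z\setminus U$ with those in $W\setminus V$ by compactness, and use that the derived equivalence $F$, applied to infinitesimal thickenings at the boundary points, identifies the local branch structures on the two sides, ensuring that the extension of $f$ is a genuine homeomorphism of $\cc$-points.
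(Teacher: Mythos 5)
Your proposal routes everything through kernel representability (a Fourier--Mukai kernel $P$ on $X\times Y$ supported in $Z\times W$), which the paper deliberately avoids: the paper works entirely with the adjoint pair $(\iota_Z,\iota_Z^!)$ and never needs to represent $F$ as an integral transform. For an equivalence of the supported categories $\mathrm{D}^b_Z(\mathrm{Coh}(X))\to\mathrm{D}^b_W(\mathrm{Coh}(Y))$, Orlov-type representability is not an off-the-shelf result, and you flag it as ``the hardest point'' without resolving it; that alone makes this a much heavier and incomplete route. More importantly, even granting the kernel, the semi-continuity step has a genuine gap. Upper semi-continuity of the hypercohomology dimensions of $L\iota_z^*P$ together with constant Euler characteristic tells you only that at a general $z$ the complex $F(\iota_Z^!\cc(z))$ has the \emph{hypercohomology profile} of a shifted skyscraper (a single $\cc$ in one degree). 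This does not imply the object is a shifted skyscraper: there are many complexes with exactly that hypercohomology profile whose support is positive-dimensional (e.g.\ $\sO_Y[-n]$ on a Fano $Y$, or a line bundle on a curve with $h^1=0$). The paper gets around this precisely with Proposition~\ref{prop:charstr}, which characterizes shifted skyscrapers in $\mathrm{D}^b_Z$ by a \emph{family} of $\mathrm{RHom}$-bounds against the twists $\iota_Z^!(\sO_Z(-m))$ for all $m\gg 0$; comparing Hilbert polynomials then forces the support to be zero-dimensional. To verify the hypotheses of that characterization for $E_p=F^{-1}(\iota_W^!\cc(p))$ with $p$ in a dense open $V$, the paper does the actual technical work you skipped: a semi-continuity argument on the \emph{target} side (with the fixed object $F(\iota_Z^!\sO_Z)$ and varying point $p$), followed by the flag argument with the $Z_j$'s and the induction in Proposition~\ref{prop:tensorm} extending the $\mathrm{RHom}$ bounds to every twist $\sO_Z(m)$. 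Your proposal has no analogue of either ingredient.

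The curve case in your proposal is also too vague to be a proof: ``matching boundary points by compactness'' and ``identifying local branch structures via infinitesimal thickenings'' are not arguments. The paper's treatment is much simpler and concrete: for $p\in W(\cc)\setminus V(\cc)$ one checks that $F^{-1}(\iota_W^!\cc(p))$ has zero-dimensional support (because it cannot hit $x$), so by Lemma~\ref{lem:pointlike} it is a shifted skyscraper $\iota_Z^!\cc(q)[m_p]$; one extends $f$ by $f(q)=p$, and an easy $\mathrm{RHom}$-vanishing argument shows no point of $Z(\cc)$ is missed. Continuity of the extension then follows exactly as in Proposition~\ref{prop:cont}. You should also note that for irreducible curves the Zariski topology on closed points is cofinite, so bijectivity already gives the homeomorphism.
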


 \subsection*{Notation}
 
 We work over the field of complex numbers. All categories and functors are $\mathbb C$-linear   and
 any functor of triangulated categories is exact  as in \cite[Definition 1.39]{H}. 
% We denote by $\sO_p$ the skyscraper sheaf at $p$. 
 Moreover, we denote by $X(\cc)$ the set of closed points of a scheme $X$.

  \subsection*{Acknowledgments}
  
I am grateful to  Kabeer Manali, Amnon Neeman, Laura Pertusi, Taro Sano, Paolo Stellari and Luca Tasin for useful conversations.  
  The author was  partially supported by INdAM-GNSAGA, 
PRIN 2020: ``Curves, Ricci flat varieties and their interactions''
and PRIN 2022: ``Symplectic varieties: their interplay with Fano manifolds and derived categories''.

\section{The Serre functor}

Let $X$ be a smooth projective algebraic   variety over $\mathbb C$.
 We denote by ${\rm D}^b(\mathrm{  Qcoh}(X))$ and $\mathrm{  D}^b(\mathrm{  Coh}(X))$ the bounded derived 
categories of quasi-coherent and coherent sheaves on $X$, respectively.
For a complex $\sF$ in $\mathrm{  D}^b(\mathrm{  Qcoh}(X))$, we denote by $\sH^j(\sF)$ the cohomologies of $\sF$ and 
by
$$\mathrm{  Supp}(\sF) := \bigcup_{j\in \mathbb Z} \mathrm{  Supp}(\sH^j(\sF)) \subset X$$
its support.
If $\sF$ is in $\mathrm{  D}^b(\mathrm{  Coh}(X))$, then  $\mathrm{  Supp}(\sF)$ is an algebraic closed subset and for any closed point $p\in X$:
$$p\in \mathrm{Supp} (\sF) \; \Leftrightarrow \;  \mathrm{  RHom}(\sF, \cc(p)) \neq 0   $$
(see \cite[Definition 2.2 and Lemma 2.4]{LLZ} or \cite[Exercise 3.30]{H}).

If $Z\subset X$ is an algebraic  closed subset,  we denote by 
 $\mathrm{  D}_Z^b({\rm Qcoh}(X))$ and $\mathrm{  D}_Z^b(\mathrm{  Coh}(X))$  the bounded derived categories of  complexes of 
 quasi-coherent and coherent sheaves  on $X$, respectively, with cohomologies supported in $Z$.
 Following \cite[Corollary 3.4]{Ba}, if $\mathrm{ Coh}_Z(X)$ denotes the abelian category of coherent sheaves supported in $Z$, then 
 there is an equivalence of derived categories
 \begin{equation}\label{eq:dercat}
\mathrm{  D}_Z^b(\mathrm{  Coh}(X)) \; \simeq \; \mathrm{  D}^b ( \mathrm{  Coh}_Z(X)).
\end{equation}
Moreover there is a natural inclusion functor 
$$\iota_Z \colon \mathrm{  D}_Z^b(\mathrm{  Qcoh}(X)) \to \mathrm{  D}^b(\mathrm{  Qcoh}(X))$$ 
%The functor $\iota_Z$ takes $D^b_Z(\mathcal Qcoh(X))$ to $D^b(\mathcal Qcoh(X))$ and 
%$D^b_Z(\mathcal Coh(X))$ to $D^b(\mathcal Coh(X))$.
which admits a right adjoint
$$\iota_Z^! \colon \mathrm{  D}^b(\mathrm{  Qcoh}(X)) \to \mathrm{  D}_Z^b(\mathrm{  Qcoh}(X))$$ 
 (\emph{cf.} \cite[p. 1353]{CS}, \cite[\S 3]{Lip}, \cite{N}).
Hence there are natural isomorphisms
$$\mathrm{  Hom}_{\mathrm{  D}^b(\mathrm{  Qcoh}(X))}( \iota_Z \sF , \sG) \; \simeq \; 
\mathrm{  Hom}_{\mathrm{  D}^b_Z(\mathrm{  Qcoh}(X))}(\sF, \iota_Z^!\sG)$$ 
for all objects   $\sF$   in ${\rm D}^b_Z({\rm Qcoh}(X))$ and 
 $\sG$ in $\mathrm{  D}^b(\mathrm{  Qcoh}(X))$.
% Moreover, the inclusion $\iota_Z$  restricts to a functor 
% $$\iota_Z \colon {\rm D}_Z^b({\rm Coh}(X)) \to {\rm D}^b({\rm Coh}(X)).$$ 
The support of a complex $\sF$ in $\mathrm{  D}_Z^b(\mathrm{  Coh}(X))$ is  the algebraic closed subset
$$ \mathrm{  Supp}(\sF)  \; := \;  \mathrm{  Supp}(\iota_Z \sF) \; \subset \; Z .$$
 
%and $\iota_Z^! \iota_Z \simeq {\rm id}_{D^b_Z(\mathcal Qcoh(X) ) }$. 
%
\begin{prop}\label{prop:formulas}
The following formulas hold for all $\sF, \sG \in \mathrm{  D}^b(\mathrm{  Qcoh}(X))$:
\begin{enumerate}
\item $\iota_Z \iota_Z^! \sF \simeq \sF$ if and only if $\mathrm{  Supp}(\sF) \subset Z$,\\
\item $\iota_Z\iota_Z^! (\sF \stackrel{ \mathrm{  L}}{\otimes} \sG) \simeq \sF \stackrel{\mathrm{ L}}{\otimes} \iota_Z\iota_Z^!\sG$.
\end{enumerate}
\end{prop}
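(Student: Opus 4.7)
For part (i), I would reduce everything to a purely formal adjunction fact. By construction, $\iota_Z$ is the inclusion of a full subcategory, hence fully faithful, and for an adjoint pair in which the left adjoint is fully faithful it is standard that the counit $\iota_Z \iota_Z^! \sF \to \sF$ is an isomorphism if and only if $\sF$ lies in the essential image of the left adjoint. In our setup, this essential image is exactly $\mathrm{D}_Z^b(\mathrm{Qcoh}(X))$, i.e.\ the subcategory of complexes $\sF$ with $\mathrm{Supp}(\sF) \subset Z$, yielding both implications.

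For part (ii), I would combine (i) with the recollement associated to the closed subset $Z$. Let $j \colon U := X \setminus Z \hookrightarrow X$ be the complementary open immersion, with pullback $j^*$ and derived pushforward $Rj_*$. For every $\sH \in \mathrm{D}^b(\mathrm{Qcoh}(X))$ there is the standard local-cohomology triangle
$$\iota_Z \iota_Z^! \sH \; \longrightarrow \; \sH \; \longrightarrow \; Rj_* j^* \sH \; \longrightarrow$$
coming from the references \cite{CS,Lip,N} already cited in the excerpt. A first observation is that $\iota_Z \iota_Z^! \circ Rj_* = 0$: by adjunction this follows from $j^* \circ \iota_Z = 0$, which holds because any complex with cohomologies supported on $Z$ restricts to zero on $U$.

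Taking the triangle above with $\sH = \sG$ and tensoring with $\sF$ yields
$$\sF \otimes^{\mathrm L} \iota_Z \iota_Z^! \sG \; \longrightarrow \; \sF \otimes^{\mathrm L} \sG \; \longrightarrow \; \sF \otimes^{\mathrm L} Rj_* j^* \sG \; \longrightarrow .$$
By the projection formula for the quasi-compact open immersion $j$, the third term is isomorphic to $Rj_*(j^*\sF \otimes^{\mathrm L} j^*\sG)$, so applying the functor $\iota_Z \iota_Z^!$ to the whole triangle kills the third term by the preliminary vanishing. Consequently
$$\iota_Z \iota_Z^!\bigl(\sF \otimes^{\mathrm L} \iota_Z \iota_Z^! \sG\bigr) \; \simeq \; \iota_Z \iota_Z^!\bigl(\sF \otimes^{\mathrm L} \sG\bigr).$$
Since $\mathrm{Supp}(\sF \otimes^{\mathrm L} \iota_Z \iota_Z^! \sG) \subset \mathrm{Supp}(\iota_Z \iota_Z^! \sG) \subset Z$, part (i) identifies the left-hand side with $\sF \otimes^{\mathrm L} \iota_Z \iota_Z^! \sG$, and the desired formula drops out.

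The main obstacle I anticipate is purely foundational: one must be sure that the localization triangle and the projection formula $\sF \otimes^{\mathrm L} Rj_* \simeq Rj_*(j^*\sF \otimes^{\mathrm L} -)$ are available in the precise form required on $\mathrm{D}^b(\mathrm{Qcoh}(X))$. Both are classical for quasi-compact quasi-separated schemes, but the formalism must be invoked cleanly so that all the natural transformations in sight are compatible; once those inputs are granted, the remaining argument is formal manipulation of the recollement.
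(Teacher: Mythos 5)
Your proof is correct, but it takes a genuinely different route from the paper: the paper does not give an argument at all, simply citing \cite[Corollary 3.2.1]{Lip} for the global version of (i) and \cite[Lemma 2.4]{CS} for (ii), whereas you reconstruct both from the recollement attached to the closed immersion $Z\hookrightarrow X$. Your treatment of (i) is a clean abstract-nonsense observation (fully faithful left adjoint $\Rightarrow$ counit iso exactly on the essential image), which is exactly the content one would extract from Lipman's corollary. For (ii) you combine the localization triangle $\iota_Z\iota_Z^!\sH \to \sH \to Rj_*j^*\sH$ with the vanishing $\iota_Z^!\circ Rj_* = 0$ (correctly deduced by adjunction from $j^*\circ\iota_Z = 0$) and the projection formula for the open immersion $j$; applying $\iota_Z\iota_Z^!$ to the tensored triangle and feeding the result back through (i) gives the claim. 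This is a perfectly sound derivation and is in essence the standard proof of the tensor compatibility of local cohomology, so it buys you a self-contained and transparent argument in place of an opaque citation, at the modest cost of explicitly invoking the recollement and projection-formula machinery on $\mathrm{D}(\mathrm{Qcoh}(X))$ for the quasi-compact open immersion $j$ --- which, as you rightly flag, is classical and is precisely what the cited references supply. One small remark: the projection formula step could be avoided by instead noting directly that $Rj_*$ is fully faithful with $j^*Rj_*\simeq\mathrm{id}$, so the essential image of $Rj_*$ is closed under $\sF\otimes^{\mathrm L}(-)$ precisely by the projection formula, so you are not actually saving anything; the argument as you wrote it is the cleanest.
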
 

\begin{proof}
The first assertion is the global version of \cite[Corollary 3.2.1]{Lip}, while
the second one is showed in \cite[Lemma 2.4]{CS}.

\end{proof}

We now discuss the existence of a Serre functor in $\mathrm{  D}_Z^b(\mathrm{  Coh}(X))$ (\emph{cf}. \cite[Definition 1.28]{H}).
Consider the functor 
$$S_Z \colon \mathrm{  D}_Z^b(\mathrm{  Coh}(X)) \to \mathrm{  D}_Z^b(\mathrm{  Coh}(X)), \quad \quad 
S_Z ( \sF) = \iota_Z^! (\iota_Z \sF \otimes \omega_X[\dim X]).$$

\begin{prop}
The functor $S_Z$ is a Serre functor for $\mathrm{  D}_Z^b(\mathrm{  Coh}(X))$.
\end{prop}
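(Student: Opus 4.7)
The plan is to reduce the Serre property for $S_Z$ to classical Serre duality on $X$ via the adjunction $\iota_Z \dashv \iota_Z^!$ and Proposition \ref{prop:formulas}. There are three things to verify: $S_Z$ takes values in $\mathrm{D}_Z^b(\mathrm{Coh}(X))$, it is an autoequivalence, and it satisfies the Serre pairing.

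First I would check well-definedness. Given $\sF \in \mathrm{D}_Z^b(\mathrm{Coh}(X))$, the complex $\iota_Z\sF \otimes \omega_X[\dim X]$ is coherent on $X$ and, since tensoring with a line bundle preserves the support, has support contained in $Z$. By Proposition \ref{prop:formulas}(i) we then have
$$\iota_Z\iota_Z^!(\iota_Z\sF \otimes \omega_X[\dim X]) \;\simeq\; \iota_Z\sF \otimes \omega_X[\dim X],$$
so through the identification \eqref{eq:dercat} (equivalently, via fully-faithfulness of the inclusion of $\mathrm{D}_Z^b(\mathrm{Coh}(X))$ into $\mathrm{D}_Z^b(\mathrm{Qcoh}(X))$) the object $S_Z\sF$ must itself be coherent.

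Next, I would exhibit an inverse: set $T_Z(\sG) := \iota_Z^!\bigl(\iota_Z\sG \otimes \omega_X^{-1}[-\dim X]\bigr)$. Applying Proposition \ref{prop:formulas}(i) again to strip the leading $\iota_Z\iota_Z^!$ (since the argument has support in $Z$), the compositions $S_Z T_Z$ and $T_Z S_Z$ collapse to the identity via the cancellation $\omega_X \otimes \omega_X^{-1} \simeq \sO_X$, which shows $S_Z$ is an autoequivalence. Finally, for $\sF,\sG \in \mathrm{D}_Z^b(\mathrm{Coh}(X))$ the required Serre pairing is obtained from the chain
\begin{align*}
\mathrm{Hom}_{\mathrm{D}_Z^b(\mathrm{Coh}(X))}(\sF,\sG)
&\simeq \mathrm{Hom}_{\mathrm{D}^b(\mathrm{Coh}(X))}(\iota_Z\sF, \iota_Z\sG) \\
&\simeq \mathrm{Hom}_{\mathrm{D}^b(\mathrm{Coh}(X))}\bigl(\iota_Z\sG,\, \iota_Z\sF \otimes \omega_X[\dim X]\bigr)^{\vee} \\
&\simeq \mathrm{Hom}_{\mathrm{D}_Z^b(\mathrm{Qcoh}(X))}\bigl(\sG,\, \iota_Z^!(\iota_Z\sF \otimes \omega_X[\dim X])\bigr)^{\vee} \\
&= \mathrm{Hom}_{\mathrm{D}_Z^b(\mathrm{Coh}(X))}(\sG, S_Z\sF)^{\vee},
\end{align*}
where the first line uses that $\iota_Z$ is fully faithful, the second is classical Serre duality on the smooth projective variety $X$, and the third is the stated adjunction.

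The main subtlety I expect is the point glossed over in the first paragraph, namely the compatibility of $\iota_Z^!$ with coherence — a priori $\iota_Z^!$ is only defined on $\mathrm{D}^b(\mathrm{Qcoh}(X))$. This is resolved precisely because the argument $\iota_Z\sF \otimes \omega_X[\dim X]$ is already coherent with support in $Z$, so that Proposition \ref{prop:formulas}(i) combined with \eqref{eq:dercat} identifies $S_Z\sF$ with the unique coherent preimage under $\iota_Z$; everything else is a formal manipulation of adjunctions and the classical Serre functor $-\otimes\omega_X[\dim X]$ on $\mathrm{D}^b(\mathrm{Coh}(X))$.
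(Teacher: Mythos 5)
Your proof is correct and follows essentially the same route as the paper: reduce everything to classical Serre duality on $X$ via the adjunction $\iota_Z \dashv \iota_Z^!$, with the inverse given by the same formula $T_Z = S_Z^{-1}$. The only minor variations are that you invoke Proposition~\ref{prop:formulas}(i) rather than (ii) to collapse $\iota_Z\iota_Z^!$ in the inverse computation (the paper moves the tensor outside first via the projection formula, you strip the counit first by the support condition — both work), you phrase the Serre pairing chain directly through the adjunction rather than via full-faithfulness plus the projection formula, and you explicitly address coherence of $S_Z\sF$, which the paper leaves implicit.
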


\begin{proof}
Let $\sF$ an $\sG$ be arbitrary objects in  $\mathrm{  D}_Z^b(\mathrm{  Coh}(X))$.
We first check that $S_Z$ is an equivalence.
To this end define  $S_Z^{-1} (\sG) = \iota_Z^! (\iota_Z\sG \otimes \omega_X^{-1}[-\dim X])$ 
and note the following isomorphisms   (\emph{cf}. Proposition \ref{prop:formulas} (ii)):
\begin{align*}
S_Z^{-1}(S_Z(\sF)) \; = \;  \iota_Z^!(\iota_Z \iota_Z^!(\iota_Z \sF \otimes \omega_X[\dim X])\otimes \omega_X^{-1}[-\dim X])   \simeq  \\
\iota_Z^! \iota_Z \iota_Z^! (\iota_Z \sF \otimes \omega_X[ \dim X]\otimes \omega_X^{-1}[- \dim X])\simeq \\
\iota_Z^! \iota_Z \sF \; \simeq \;  \sF.
\end{align*}
Similarly, one checks that $S_Z ( S_Z^{-1} (\sG) )\simeq \sG$.  
  Finally, the following isomorphisms hold   thanks to Serre duality on $X$:
\begin{align*}
\mathrm{ Hom}_{\mathrm{  D}_Z^b(\mathrm{  Coh}(X))} (\sG, S_Z(\sF) ) \simeq 
\mathrm{ Hom}_{\mathrm{  D}^b(\mathrm{  Coh}(X))} (\iota_Z \sG ,\iota_Z\iota_Z^! ( \iota_Z \sF \otimes \omega_X[\dim X]) )   \simeq \\
\mathrm{ Hom}_{\mathrm{  D}^b(\mathrm{  Coh}(X))} (\iota_Z \sG ,( \iota_Z\iota_Z^! \iota_Z \sF \otimes \omega_X[\dim X]) )  \simeq   \\
\mathrm{ Hom}_{\mathrm{  D}^b(\mathrm{  Coh}(X))} (\iota_Z \sG ,  \iota_Z \sF \otimes \omega_X[\dim X]  ) \simeq \\
 \mathrm{ Hom}_{\mathrm{  D}^b(\mathrm{  Coh}(X))} (\iota_Z\sF , \iota_Z \sG )^{*} \simeq \\
  \mathrm{ Hom}_{\mathrm{  D}_Z^b(\mathrm{  Coh}(X))}(\sF , \sG)^*.
\end{align*}

\end{proof}

Let $Y$ be another smooth projective algebraic variety and let $W\subset Y$ be an algebraic  closed subset. 

\begin{cor}\label{cor:eqdim}
If  $F\colon \mathrm{  D}_Z^b(\mathrm{  Coh}(X)) \to \mathrm{  D}_W^b(\mathrm{  Coh}(Y))$ is an equivalence, then $\dim X = \dim Y$.
\end{cor}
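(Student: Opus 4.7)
The plan is to exploit the standard categorical fact that an exact equivalence between triangulated categories possessing Serre functors must intertwine them. Since $S_Z$ and $S_W$ were constructed in the previous proposition, the equivalence $F$ yields $F\circ S_Z\simeq S_W\circ F$; I will then evaluate both sides on a well-chosen object to isolate $\dim X$ and $\dim Y$.

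The test object will be a skyscraper sheaf $\cc(p)$ for a closed point $p\in Z$. Because $\cc(p)$ is supported on $Z$, Proposition \ref{prop:formulas}(i) gives $\iota_Z\iota_Z^!\,\cc(p)\simeq \cc(p)$, and since $\omega_X$ is a line bundle, $\cc(p)\otimes\omega_X\simeq \cc(p)$. Unfolding the definition of $S_Z$ therefore produces $S_Z(\cc(p))\simeq \cc(p)[\dim X]$. Writing $\sG:=F(\cc(p))$, which is nonzero since $F$ is an equivalence, the Serre intertwining yields
\[S_W(\sG)\;\simeq\;\sG[\dim X].\]

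Next I would push this relation into $\mathrm{D}^b(\mathrm{Qcoh}(Y))$ by applying $\iota_W$. Since $\iota_W\sG\otimes\omega_Y$ is supported on $W$, Proposition \ref{prop:formulas}(i) and (ii) rewrite the relation as
\[\iota_W\sG\otimes\omega_Y\;\simeq\;\iota_W\sG\,[\dim X-\dim Y]\]
in $\mathrm{D}^b(\mathrm{Qcoh}(Y))$. Passing to cohomology sheaves, tensoring with the line bundle $\omega_Y$ is exact and preserves the locus where each $\sH^j$ is nonzero. Because $\sG$ is a nonzero object of a bounded derived category, the set $T:=\{\,j\in\mathbb{Z}\mid \sH^j(\iota_W\sG)\neq 0\,\}$ is finite and nonempty; the displayed isomorphism forces $T$ to be invariant under translation by $\dim X-\dim Y$, whence $\dim X=\dim Y$.

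The only substantive point to verify is that $F$ indeed intertwines the Serre functors, but this is a purely formal consequence of their characterization via duality together with the fact that $F$ is a $\cc$-linear equivalence. The remainder is elementary cohomological bookkeeping, so I do not expect any real obstacle.
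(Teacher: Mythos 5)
Your argument is correct and follows essentially the same route as the paper: intertwine $F$ with the Serre functors, evaluate on a skyscraper sheaf supported at a point of $Z$, push forward via $\iota_W$ and simplify with Proposition~\ref{prop:formulas} to obtain $\iota_W\sG\otimes\omega_Y\simeq\iota_W\sG[\dim X-\dim Y]$, then read off the shift from the nonempty finite set of nonvanishing cohomology degrees. The only cosmetic difference is that you work with $\cc(p)$ directly while the paper writes $\iota_Z^!\cc(p)$ (isomorphic objects of $\mathrm{D}^b_Z$), and you spell out the final degree-counting step that the paper cites from \cite[Proposition 4.1]{H}.
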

\begin{proof}
Note the isomorphism  $S_W \circ F \simeq F \circ S_Z$ (\cite[Lemma 1.30]{H}) where 
$S_W$ is the Serre functor of $\mathrm{  D}_W^b(\mathrm{  Coh}(Y))$. Let $p\in Z$ be a closed point  and note the isomorphisms
\begin{align*}
\iota_W^! (\iota_W F(\iota_Z^!\cc(p)) \otimes \omega_Y[\dim Y]) \simeq \\
S_W(F(\iota_Z^!\cc(p)) )
 \simeq \\ 
 F( S_Z(\iota_Z^! \cc(p) ) ) \simeq \\
 F(\iota_Z^!\cc(p)[\dim X] )  \simeq \\
F(\iota_Z^!\cc(p))[\dim X]. 
\end{align*}
By applying $\iota_W$ we find 
\begin{gather*}
\iota_W \iota_W^! (\iota_W F(\iota_Z^!\cc(p)) \otimes \omega_Y[\dim Y])  \simeq \iota_W F(\iota_Z^!\cc(p))[\dim X].
\end{gather*}
Moreover, by Proposition \ref{prop:formulas} (ii), the left-hand side is isomorphic to $\iota_W F(\iota_Z^!\cc(p)) \otimes \omega_Y[\dim Y]$. 
We conclude  that 
$$ \iota_W F(\iota_Z^!\cc(p)) \otimes \omega_Y \simeq   \iota_W F(\iota_Z^!\cc(p)) [\dim Y-\dim X].$$
This immediately implies $\dim X = \dim Y$ as in \cite[Proposition 4.1]{H} since $\iota_W F(\iota_Z^!\cc(p))$ is a bounded complex.

\end{proof}

\section{Point like objects in the supported case}
Let $Z \subset X$  be as in the previous section.
% and  denote by $i \colon Z\hookrightarrow X$ the natural inclusion.

\begin{defn}\label{def:pl}
An object $P$ of $\mathrm{  D}_Z^b(\mathrm{  Coh}(X))$ is  \emph{point like} if:
\begin{enumerate}
\item $S_Z(P)\simeq P[\dim X]$, \\
\item  $\mathrm{  Hom}_{\mathrm{  D}_Z^b(\mathrm{  Coh}(X))} (P, P[k])  =  0$ for $k<0$, and \\
\item ${\rm Hom}_{\mathrm{  D}_Z^b(\mathrm{  Coh}(X))}(P,P)   =  \mathbb C$.
\end{enumerate}
 An object satisfying the last condition is called \emph{simple}. 
 \end{defn}
 
 \begin{prop}\label{prop:preservesplo}
 Any equivalence $F\colon \mathrm{  D}_Z^b(\mathrm{  Coh}(X)) \to \mathrm{  D}_W^b(\mathrm{  Coh}(Y))$ sends a point like object to a point like object.
 \end{prop}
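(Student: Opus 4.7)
My plan is to verify each of the three defining conditions of a point like object separately, observing that all three are formal consequences of properties that equivalences automatically enjoy.

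First, I would handle conditions (ii) and (iii) of Definition \ref{def:pl}. These are purely Hom-theoretic: since $F$ is an equivalence it induces isomorphisms
\[
\mathrm{Hom}_{\mathrm{D}_Z^b(\mathrm{Coh}(X))}(P,P[k]) \;\simeq\; \mathrm{Hom}_{\mathrm{D}_W^b(\mathrm{Coh}(Y))}(F(P),F(P)[k])
\]
for every $k\in\mathbb Z$. Applied with $k<0$ this gives condition (ii) for $F(P)$, and applied with $k=0$ it gives $\mathrm{Hom}(F(P),F(P))=\mathbb C$, namely condition (iii).

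For condition (i), the key input is that any equivalence between triangulated categories commutes with Serre functors (Huybrechts, Lemma 1.30), so that $S_W\circ F\simeq F\circ S_Z$. Applying this to $P$ and using $S_Z(P)\simeq P[\dim X]$ yields
\[
S_W(F(P)) \;\simeq\; F(S_Z(P)) \;\simeq\; F(P[\dim X]) \;\simeq\; F(P)[\dim X].
\]
To match the required shift by $\dim Y$ I invoke Corollary \ref{cor:eqdim}, which ensures $\dim X=\dim Y$. Substituting this equality gives $S_W(F(P))\simeq F(P)[\dim Y]$, i.e.\ condition (i) for $F(P)$.

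There is no real obstacle here; the proposition is essentially a compilation of functorial properties of equivalences (preservation of Hom and of Serre functor) combined with the dimension statement already established in the previous section. The only point worth emphasizing when writing the proof out is that the invariance of the shift in condition (i) genuinely requires Corollary \ref{cor:eqdim}, without which $S_W(F(P))$ would only be shown to be a shift of $F(P)$ by $\dim X$ rather than by $\dim Y$.
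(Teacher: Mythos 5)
Your proof matches the paper's argument exactly: conditions (ii) and (iii) follow formally from the Hom-isomorphisms induced by the equivalence, and condition (i) follows from the commutation of $F$ with Serre functors together with Corollary \ref{cor:eqdim} to replace $\dim X$ by $\dim Y$. The only difference is that you spell out the verification of (ii) and (iii), which the paper dismisses as "easy to check."
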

 
 \begin{proof}
 The conditions $(ii)$ and $(iii)$ are easy to check. For $(i)$, let $P$ be a point like object in $\mathrm{  D}_Z^b(\mathrm{  Coh}(X))$.
 As the Serre functor commutes with any equivalence (\cite[Lemma 1.30]{H}), we have by Corollary \ref{cor:eqdim}:
 $$   S_W ( F (P ) ) \simeq F (S_Z(P ) ) \simeq F ( P[ \dim X]) \simeq  F(P )[\dim Y].$$
  \end{proof}
 
\begin{prop}\label{prop:pointlike}
 The objects isomorphic to  $\iota_Z^! \cc(p) [m]$ with $p\in Z$  closed point  and 
$m\in \mathbb Z$ are point like objects in $\mathrm{  D}_Z^b(\mathrm{  Coh}(X))$.
\end{prop}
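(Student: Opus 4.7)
The plan is to verify the three defining conditions of Definition \ref{def:pl} directly, using Proposition \ref{prop:formulas} together with the adjunction between $\iota_Z$ and $\iota_Z^!$. Since $\mathbb C(p)$ is supported at the closed point $p\in Z$, the first part of Proposition \ref{prop:formulas} gives the key simplification $\iota_Z\iota_Z^!\mathbb C(p)\simeq \mathbb C(p)$, and everything else will follow from this.  Shifting by $m$ is immaterial in all three conditions, so we may assume $m=0$ and set $P=\iota_Z^!\mathbb C(p)$.

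For condition (i), I will compute $S_Z(P)$ directly:
\[
S_Z(P)=\iota_Z^!\bigl(\iota_Z\iota_Z^!\mathbb C(p)\otimes \omega_X[\dim X]\bigr)
\simeq \iota_Z^!\bigl(\mathbb C(p)\otimes \omega_X\bigr)[\dim X].
\]
Since $\omega_X$ is locally free, $\mathbb C(p)\otimes \omega_X\simeq \mathbb C(p)$, so $S_Z(P)\simeq P[\dim X]$.

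For conditions (ii) and (iii), I will use the adjunction
\[
\mathrm{Hom}_{\mathrm D^b(\mathrm{Qcoh}(X))}(\iota_Z\mathcal F,\mathcal G)\simeq \mathrm{Hom}_{\mathrm D^b_Z(\mathrm{Qcoh}(X))}(\mathcal F,\iota_Z^!\mathcal G)
\]
with $\mathcal F=\iota_Z^!\mathbb C(p)$ and $\mathcal G=\mathbb C(p)[k]$ together with $\iota_Z\iota_Z^!\mathbb C(p)\simeq \mathbb C(p)$ to obtain
\[
\mathrm{Hom}_{\mathrm D^b_Z(\mathrm{Coh}(X))}(P,P[k])\;\simeq\;\mathrm{Hom}_{\mathrm D^b(\mathrm{Coh}(X))}(\mathbb C(p),\mathbb C(p)[k])\;=\;\mathrm{Ext}^k(\mathbb C(p),\mathbb C(p)),
\]
where I implicitly use the equivalence \eqref{eq:dercat} to pass between the coherent and quasi-coherent supported categories. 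Since $\mathbb C(p)$ is a sheaf (concentrated in degree zero), the right-hand side vanishes for $k<0$, and equals $\mathbb C$ for $k=0$. This gives both (ii) and (iii).

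There is no serious obstacle here: the entire argument reduces, via the adjunction and Proposition \ref{prop:formulas}, to well-known properties of the skyscraper sheaf $\mathbb C(p)$ on $X$. The only point to handle carefully is making sure to invoke Proposition \ref{prop:formulas}(i) with the hypothesis $\mathrm{Supp}(\mathbb C(p))=\{p\}\subset Z$ satisfied, which is automatic from the assumption $p\in Z$.
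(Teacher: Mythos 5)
Your proposal is correct and follows essentially the same approach as the paper's proof: verify conditions (ii) and (iii) by the $\iota_Z\dashv\iota_Z^!$ adjunction, and compute $S_Z(\iota_Z^!\cc(p))$ using Proposition \ref{prop:formulas}. The only cosmetic difference is that you simplify via Proposition \ref{prop:formulas}(i) (collapsing $\iota_Z\iota_Z^!\cc(p)\simeq\cc(p)$ directly), whereas the paper routes through Proposition \ref{prop:formulas}(ii) to pull the tensor inside $\iota_Z\iota_Z^!$ before collapsing; both yield the same conclusion.
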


\begin{proof}
We prove the proposition for $m=0$, the other cases being similar. 
Let $p\in Z$ be a closed point. 
By Proposition \ref{prop:formulas}  $\iota_Z^! \cc(p)$ satisfies the first condition of Definition \ref{def:pl}: 
\begin{align*}
S_Z (\iota_Z^! \cc(p)) \simeq  \iota_Z^! (\iota_Z \iota_Z^! \cc(p) \otimes \omega_X[ \dim X]) \simeq \\
\iota_Z^! \iota_Z \iota_Z^! (\cc(p) \otimes \omega_X[\dim X]) \simeq \\
\iota_Z^! \cc(p) [\dim X].
\end{align*} 
For the second and third  conditions we see that   by adjunction
\begin{align*}
\mathrm{ Hom} (\iota_Z^! \cc(p) , \iota_Z^! \cc(p) [k] ) \simeq 
\mathrm{  Hom} (\iota_Z \iota_Z^! \cc(p) , \cc(p) [k] )\simeq 
\mathrm{  Hom} ( \cc(p) , \cc(p) [k] )
\end{align*} 
for all $k\in \mathbb Z$.
These spaces  are trivial for $k<0$ and isomorphic  to $\mathbb C$ for $k=0$.
\end{proof}

We recall the following lemma \cite[Lemma 4.5]{H} which will be helpful in the rest of the paper.

\begin{lemma}\label{lem:pointlike}
Let $P$ be an object in $\mathrm{  D}^b( \mathrm{  Coh}(X) )$ such that
 $\mathrm{  Hom}(P,P) = \mathbb C$ and $\mathrm{  Hom}(P,P [ k ] ) = 0$ for $k<0$. 
Moreover  suppose   $\mathrm{  Supp}(P)$ is zero-dimensional. 
Then $P \simeq \cc(p)[m]$ for some  closed point $p \in \mathrm{  Supp}(P)$  and $m\in \mathbb Z$.

\end{lemma}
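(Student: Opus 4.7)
The plan is to first decompose $P$ according to the closed points of $\mathrm{Supp}(P)$ and use simplicity to reduce to the case of a single point; then to show that a complex $P$ supported at one point with no negative self-$\mathrm{Hom}$'s is concentrated in a single cohomological degree; and finally to identify the resulting simple Artinian sheaf as $\cc(p)$.

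Since $\mathrm{Supp}(P) = \{p_1, \ldots, p_k\}$ is a finite set of closed points, any coherent sheaf supported on it splits canonically as a direct sum of sheaves supported at the individual $p_i$, so there is a product decomposition of abelian categories $\mathrm{Coh}_{\{p_1, \ldots, p_k\}}(X) \simeq \prod_i \mathrm{Coh}_{\{p_i\}}(X)$. Passing to bounded derived categories via the equivalence \eqref{eq:dercat}, one obtains $\mathrm{D}^b_{\{p_1, \ldots, p_k\}}(\mathrm{Coh}(X)) \simeq \bigoplus_i \mathrm{D}^b_{\{p_i\}}(\mathrm{Coh}(X))$, hence $P \simeq \bigoplus_i P_i$ with $P_i$ supported at $p_i$. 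Orthogonality of the summands yields $\mathrm{Hom}(P,P) = \bigoplus_i \mathrm{Hom}(P_i, P_i)$, so the simplicity hypothesis forces all but one summand to vanish. Thus I may assume $P$ is supported at a single closed point $p$.

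Next, set $m_0 := \min\{j \mid \sH^j(P) \neq 0\}$ and $n_0 := \max\{j \mid \sH^j(P) \neq 0\}$ and consider the canonical truncation morphisms $\alpha \colon \sH^{m_0}(P)[-m_0] \to P$ and $\beta \colon P \to \sH^{n_0}(P)[-n_0]$, which are isomorphisms on $\sH^{m_0}$ and $\sH^{n_0}$ respectively. Both $\sH^{m_0}(P)$ and $\sH^{n_0}(P)$ are non-zero Artinian $\sO_{X,p}$-modules, so the residue field $\cc(p)$ occurs as a quotient of $\sH^{n_0}(P)$ and as the socle of $\sH^{m_0}(P)$, giving a non-zero composition $\phi \colon \sH^{n_0}(P) \twoheadrightarrow \cc(p) \hookrightarrow \sH^{m_0}(P)$. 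If $m_0 < n_0$, the composite
\[
P \xrightarrow{\beta} \sH^{n_0}(P)[-n_0] \xrightarrow{\phi[-n_0]} \sH^{m_0}(P)[-n_0] \xrightarrow{\alpha[m_0-n_0]} P[m_0 - n_0]
\]
induces $\phi$ on $\sH^{n_0}$, so it is a non-zero element of $\mathrm{Hom}(P, P[m_0 - n_0])$; as $m_0 - n_0 < 0$, this contradicts the hypothesis. Therefore $m_0 = n_0$ and $P \simeq \sF[-m_0]$ for the coherent sheaf $\sF := \sH^{m_0}(P)$.

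To conclude, simplicity gives $\mathrm{End}_{\sO_{X,p}}(\sF) = \cc$; since $\sO_{X,p}/\mathrm{Ann}(\sF)$ embeds as a subring of $\cc$, the maximal ideal $\mathfrak{m}_p$ annihilates $\sF$, forcing $\sF \simeq \cc(p)^{\oplus n}$ for some $n$, and then $M_n(\cc) = \mathrm{End}(\sF) = \cc$ forces $n = 1$. Hence $P \simeq \cc(p)[-m_0]$. The main technical obstacle I expect is verifying the non-vanishing of the three-step composition above — specifically, that it genuinely restricts to $\phi$ on $\sH^{n_0}$, for which one needs the canonical identifications $\tau^{\leq m_0}P \simeq \sH^{m_0}(P)[-m_0]$ and $\tau^{\geq n_0}P \simeq \sH^{n_0}(P)[-n_0]$ (valid because $m_0$ and $n_0$ are the extremal degrees of non-vanishing cohomology), together with careful bookkeeping of the degree shifts.
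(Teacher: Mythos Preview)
The paper does not give its own proof of this lemma; it merely recalls the statement and cites \cite[Lemma 4.5]{H}. Your argument is correct and is essentially the standard proof: reduce to a single point by simplicity, use the nonvanishing of $\mathrm{Hom}(\sH^{n_0}(P),\sH^{m_0}(P))$ to produce a nonzero morphism $P\to P[m_0-n_0]$ and force $m_0=n_0$, then identify the remaining simple Artinian sheaf with $\cc(p)$. The only stylistic difference from Huybrechts's presentation is that you build the negative self-extension by an explicit three-step composite through the truncations, whereas the usual write-up phrases the same nonvanishing via the spectral sequence $E_2^{p,q}=\bigoplus_i \mathrm{Hom}(\sH^i(P),\sH^{i+q}(P)[p])\Rightarrow \mathrm{Hom}(P,P[p+q])$ and reads off the corner term at $(p,q)=(0,m_0-n_0)$; the content is identical.
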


We begin the classification of point like objects in $\mathrm{  D}_Z^b(\mathrm{  Coh}(X))$ when $\dim Z=0$.

\begin{prop}\label{prop:isosupport0}
If  $\dim Z=0$,
then the point like objects in $\mathrm{  D}_Z^b(\mathrm{  Coh}(X))$  are the objects isomorphic
to $\iota^!_Z \cc(p)[m]$ with  $p\in Z$ closed point   and $m\in \mathbb Z$. 
\end{prop}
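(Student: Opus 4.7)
The plan is to reduce the statement to Lemma \ref{lem:pointlike} by transferring the problem from $\mathrm{D}_Z^b(\mathrm{Coh}(X))$ to the ambient category $\mathrm{D}^b(\mathrm{Coh}(X))$ via the fully faithful inclusion $\iota_Z$. Since $\dim Z = 0$, the set $Z$ consists of finitely many closed points, and any object with cohomology supported in $Z$ has zero-dimensional support, so Lemma \ref{lem:pointlike} will apply verbatim.

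More precisely, let $P$ be a point like object in $\mathrm{D}_Z^b(\mathrm{Coh}(X))$. First I would push $P$ into the ambient category by considering $\iota_Z P \in \mathrm{D}^b(\mathrm{Coh}(X))$. Since $\iota_Z$ is a full embedding of a full triangulated subcategory, we have the natural isomorphisms
\begin{equation*}
\mathrm{Hom}_{\mathrm{D}^b(\mathrm{Coh}(X))}(\iota_Z P, \iota_Z P[k]) \; \simeq \; \mathrm{Hom}_{\mathrm{D}_Z^b(\mathrm{Coh}(X))}(P, P[k])
\end{equation*}
for every $k \in \mathbb Z$, so $\iota_Z P$ is simple and has no negative self-extensions in $\mathrm{D}^b(\mathrm{Coh}(X))$. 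Moreover, by definition $\mathrm{Supp}(\iota_Z P) \subset Z$, which is zero-dimensional. Hence Lemma \ref{lem:pointlike} gives a closed point $p \in Z$ and an integer $m$ such that $\iota_Z P \simeq \cc(p)[m]$ in $\mathrm{D}^b(\mathrm{Coh}(X))$.

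To conclude it remains to pull this isomorphism back inside $\mathrm{D}_Z^b(\mathrm{Coh}(X))$ by applying $\iota_Z^!$. Since $\mathrm{Supp}(\iota_Z P) \subset Z$, Proposition \ref{prop:formulas} (i) yields $\iota_Z \iota_Z^! (\iota_Z P) \simeq \iota_Z P$; combined with the fully faithfulness of $\iota_Z$ this gives $\iota_Z^! \iota_Z P \simeq P$. Applying $\iota_Z^!$ to the isomorphism $\iota_Z P \simeq \cc(p)[m]$ therefore produces $P \simeq \iota_Z^! \cc(p)[m]$, which by Proposition \ref{prop:pointlike} is indeed point like. The converse inclusion (that every such $\iota_Z^! \cc(p)[m]$ is point like) is already Proposition \ref{prop:pointlike}, so the classification is complete.

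I do not anticipate a serious obstacle: the argument is essentially a transfer to the ambient derived category where Lemma \ref{lem:pointlike} applies. The one subtle point to verify carefully is that $\iota_Z$ is fully faithful (so that Hom-spaces transfer) and that $\iota_Z^! \iota_Z \simeq \mathrm{id}$ on objects supported in $Z$; both facts follow formally from the adjunction $\iota_Z \dashv \iota_Z^!$ together with Proposition \ref{prop:formulas} (i), and neither step requires the assumption $\dim Z = 0$ — that hypothesis is used only to guarantee the zero-dimensional support condition needed to invoke Lemma \ref{lem:pointlike}.
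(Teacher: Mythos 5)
Your proposal is correct and follows essentially the same route as the paper: push $P$ into $\mathrm{D}^b(\mathrm{Coh}(X))$ via $\iota_Z$, note that the Hom-conditions transfer by full faithfulness and that $\dim Z = 0$ forces zero-dimensional support, apply Lemma \ref{lem:pointlike} to get $\iota_Z P \simeq \cc(p)[m]$, and pull back via $\iota_Z^!$. Your write-up just spells out the $\iota_Z^! \iota_Z \simeq \mathrm{id}$ step and the converse inclusion a bit more explicitly than the paper does.
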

\begin{proof}
Let $P$ be a point like object in $ \mathrm{  D}_Z^b(\mathrm{  Coh}(X)) $. Then $\iota_{Z}P$ has zero-dimensional support and 
$\mathrm{  Hom}_{\mathrm{  D}^b(\mathrm{  Coh}(X)) } (\iota_Z P , \iota_Z P[ k ] ) \simeq 
 \mathrm{  Hom}_{\mathrm{  D}_Z^b(\mathrm{  Coh}(X))} (P , P[ k ] )$ is trivial for $k<0$,  
 and isomorphic to $\mathbb C$ for $k=0$.
By Lemma \ref{lem:pointlike}  there is an isomorphism $\iota_{Z}P \simeq \cc(p)[m]$ for some closed point $p\in Z$  and  
$m\in \mathbb Z$. It follows  $P\simeq \iota_Z^!\cc(p)[m]$.
\end{proof}

	\begin{prop}\label{prop:closedpoints}
If $F\colon \mathrm{  D}_Z^b(\mathrm{  Coh}(X)) \to  \mathrm{  D}_W^b(\mathrm{  Coh}(Y))$ is an equivalence and  $\dim Z=0$, then $\dim W=0$ and 
there is a bijection between $Z(\cc)$ and $ W(\cc)$.
%$Z$ consists  of  $h$  closed points,  then also  $W$  consists of  $h$ closed points.

\end{prop}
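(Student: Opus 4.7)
The plan is to combine the classification of point-like objects in Proposition \ref{prop:isosupport0} with a cardinality comparison. First I would observe that, since $Z$ is zero-dimensional, $Z(\cc)$ is finite. Proposition \ref{prop:isosupport0} then parameterizes the isomorphism classes of point-like objects in $\mathrm{D}_Z^b(\mathrm{Coh}(X))$ by $Z(\cc)\times \mathbb Z$: the assignment $(p,m)\mapsto [\iota_Z^! \cc(p)[m]]$ is injective because applying $\iota_Z$ and using Proposition \ref{prop:formulas}(i) recovers $\cc(p)[m]$, and distinct skyscrapers or shifts are non-isomorphic in $\mathrm{D}^b(\mathrm{Coh}(X))$. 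In particular, this set of isomorphism classes is countable.

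Next, I would exploit that by Proposition \ref{prop:preservesplo} both $F$ and a quasi-inverse send point-like objects to point-like objects, so $F$ induces a bijection of isomorphism classes of point-like objects. On the $W$-side, Proposition \ref{prop:pointlike} shows that $\iota_W^!\cc(q)$ is point-like for every $q\in W(\cc)$, and distinct $q$ give non-isomorphic such objects by the same argument as above. If $\dim W\geq 1$, then $W(\cc)$ is uncountable (a positive-dimensional complex variety has continuum many closed points), yielding uncountably many isomorphism classes of point-like objects in $\mathrm{D}_W^b(\mathrm{Coh}(Y))$ and contradicting the countability on the $Z$-side. Hence $\dim W=0$.

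Once $\dim W=0$ is established, Proposition \ref{prop:isosupport0} applies symmetrically to $W$ and identifies its point-like classes with $W(\cc)\times \mathbb Z$. Since $F$ commutes with translation, the induced bijection on point-like classes is $\mathbb Z$-equivariant under shift, so passing to the orbit sets yields the bijection $Z(\cc)\leftrightarrow W(\cc)$. I do not foresee any real obstacle here: everything reduces to the countable-versus-uncountable comparison, and the only technical care required is in verifying the faithful parameterizations $(p,m)\leftrightarrow [\iota_Z^!\cc(p)[m]]$ and $(q,n)\leftrightarrow [\iota_W^!\cc(q)[n]]$, which follows at once from Proposition \ref{prop:formulas}(i).
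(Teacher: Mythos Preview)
Your argument is correct, but it takes a somewhat different path than the paper. The paper constructs directly a map $f\colon W(\cc)\to Z(\cc)$ by sending $q$ to the unique $p$ with $F^{-1}(\iota_W^!\cc(q))\simeq \iota_Z^!\cc(p)[m]$ (using Proposition~\ref{prop:isosupport0}), and proves $f$ is injective by comparing supports of $F(\iota_Z^!\cc(p))$. Since $Z(\cc)$ is finite, injectivity forces $W(\cc)$ finite and hence $\dim W=0$; surjectivity follows by running the same argument for $F^{-1}$. Your route instead counts isomorphism classes of point-like objects globally and invokes the uncountability of $W(\cc)$ for $\dim W\geq 1$, then uses shift-equivariance of $F$ to pass to orbits. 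Both are valid; the paper's version is slightly more elementary in that it never appeals to the cardinality of a positive-dimensional complex variety, while yours is perhaps tidier in that it treats the bijection $Z(\cc)\leftrightarrow W(\cc)$ uniformly via the $\mathbb Z$-action rather than by a separate surjectivity argument.
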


\begin{proof}
%Let us begin with the case $h=1$ and suppose $Z=\{ p \}$.
Let $q\in W$ be a  closed point and consider the complex $E_q := F^{-1}(\iota_W^!\cc(q))$ where $F^{-1}$ is a quasi-inverse to $F$. 
Then, by Proposition \ref{prop:preservesplo},  $E_q$ is a point like object in ${\rm D}^b_Z ({\rm Coh}(X))$ and 
by Proposition \ref{prop:isosupport0}  $E_q\simeq \iota_{Z}^!\cc(p)[m]$ for a unique closed point $p\in Z$ and
$m\in  \mathbb Z$.  In this way we define a function $f\colon W(\cc) \to Z(\cc)$.
We first check that $f$ is injective.  If there exists another    closed point $q' \in W$  such that 
$f(q)=f(q')$, then as before we would have  $F( \iota_Z^!\cc(p)[m'] ) =  \iota_W^!\cc(q')$
 for some $m'\in \mathbb Z$.  Hence we have
 $$\{q \} = \mathrm{  Supp } (\iota_W^!\cc(q)[-m] )   =  \mathrm{  Supp } ( F( \iota_{Z}^!\cc(p) ) ) =
  \mathrm{  Supp} ( \iota_W^!\cc(q')[-m'] )  = \{q'\}.$$ 
It follows that $W(\cc)$ is a finite  and
$\dim W=0$. Moreover, by repeating the argument for  the quasi-inverse $F^{-1}$ one checks also the surjectivity of $f$.

\end{proof}

\begin{ex}
Let $X$ be an abelian variety of positive dimension and let  $Y=\widehat{X}$ be the dual abelian variety. 
Moreover let $\sP$ be a normalized Poincar\'e line bundle.
Consider the Fourier--Mukai--Poincar\'{e} transform 
$\mathrm{  R}\widehat {\mathcal S}\colon \mathrm{  D}^b(\mathrm{  Coh}(X)) \to \mathrm{  D}^b(\mathrm{  Coh}(Y))$ defined as 
$\mathrm{  R}\widehat {\mathcal S} (\sF) = \mathrm{  R}q_* ( p^* \sF \otimes \sP)$ for any $\sF$ in $\mathrm{  D}^b(\mathrm{  Coh}(X))$, 
where $p$ and $q$ are the projections onto $X$ and
 $Y$, respectively. Then as in \cite[Example 2.9]{M}
$\mathrm{  R}\widehat {\mathcal S}$ is an equivalence  and $\mathrm{  R}^{\dim X}\widehat {\mathcal S}$ gives an equivalence of
 abelian categories between the category  $\mathrm{  Unip}(X)$ of unipotent bundles of finite rank on $X$,  
and $\mathrm{  Coh}_{\{ \hat{0} \}}(Y)$ (where 
$\hat{0}$ is the origin of $Y$).
Then  $\mathrm{  D}^b(\mathrm{  Unip}(X)) \simeq \mathrm{  D}^b ( \mathrm{  Coh}_{\{ \hat{0} \}}(Y) )$ and, by 
Corollary \ref{cor:eqdim} and Proposition \ref{prop:closedpoints}, there
is no smooth projective variety $M$ such that 
$\mathrm{  D}^b(\mathrm{  Unip}(X))$ is equivalent to $\mathrm{  D}^b (\mathrm{  Coh}(M))$.

\end{ex}

In the positive-dimensional case the  characterization
of point like objects in $\mathrm{  D}_Z^b(\mathrm{  Coh}(X))$ is possible under a positivity assumption on the restriction of the canonical bundle to the support.

\begin{prop}\label{prop:pointlikeample}
Suppose   $\omega_X|_Z$ or $\omega_X^{-1}|_Z $ is ample. 
Then the point like objects  of $\mathrm{  D}_Z^b(\mathrm{  Coh}(X))$ are the objects isomorphic
to $\iota_Z^! \cc(p) [r]$ where $p\in Z$ is a closed point  and $r\in \mathbb Z$.
\end{prop}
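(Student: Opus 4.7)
The plan is to reduce the claim, via Lemma \ref{lem:pointlike} applied to $\iota_Z P \in \mathrm{D}^b(\mathrm{Coh}(X))$, to the assertion that $\iota_Z P$ has zero-dimensional support. Conditions $(ii)$ and $(iii)$ of Definition \ref{def:pl} transfer directly through the fully faithful inclusion $\iota_Z$, so condition $(i)$ combined with the ampleness hypothesis is where all the work happens.

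First I would unfold condition $(i)$. Reading $S_Z(P) \simeq P[\dim X]$ as
\[
\iota_Z^!\bigl(\iota_Z P \otimes \omega_X[\dim X]\bigr) \;\simeq\; P[\dim X],
\]
shifting by $-\dim X$, applying $\iota_Z$, and using Proposition \ref{prop:formulas}$(ii)$ together with $\mathrm{Supp}(\iota_Z P) \subset Z$ and Proposition \ref{prop:formulas}$(i)$, one obtains $\iota_Z P \otimes \omega_X \simeq \iota_Z P$ in $\mathrm{D}^b(\mathrm{Qcoh}(X))$. Since tensoring with a line bundle is exact, each cohomology sheaf $\sF := \sH^j(\iota_Z P)$ satisfies $\sF \otimes \omega_X^{\otimes n} \simeq \sF$ for every $n \in \mathbb{Z}$.

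The key step is then to deduce $\dim \mathrm{Supp}(\sF) = 0$. Pick an irreducible component $W$ of $\mathrm{Supp}(\sF)$ with generic point $\eta$; Nakayama applied to the nonzero module $\sF_\eta$ shows that $\sF|_W := \sF \otimes_{\sO_X} \sO_W$ is a nonzero coherent sheaf on $W$. Restricting the displayed iso to $W$ yields $\sF|_W \otimes L^{\otimes n} \simeq \sF|_W$ for all $n \in \mathbb{Z}$, where $L := \omega_X|_W$ (respectively $L := \omega_X^{-1}|_W$) is ample on $W$ since $W \subset Z$ and ampleness is inherited by closed subschemes. Asymptotic Riemann--Roch forces $n \mapsto \chi(W, \sF|_W \otimes L^{\otimes n})$, a polynomial of degree $\dim W$, to be constant, hence $\dim W = 0$. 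This step is the main obstacle, and the subtlety is that $\mathrm{Supp}(\sF)$ may carry a non-reduced scheme structure, which is why one restricts to a reduced irreducible component $W$ and uses Nakayama to preserve nonvanishing before invoking asymptotic Riemann--Roch.

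Once $\mathrm{Supp}(\iota_Z P)$ is known to be zero-dimensional, Lemma \ref{lem:pointlike} yields $\iota_Z P \simeq \cc(p)[m]$ for some closed point $p \in Z$ and $m \in \mathbb{Z}$. Full-faithfulness of $\iota_Z$ and the resulting unit isomorphism $\mathrm{id} \simeq \iota_Z^! \iota_Z$ then give $P \simeq \iota_Z^! \iota_Z P \simeq \iota_Z^! \cc(p)[m]$. The converse direction is already contained in Proposition \ref{prop:pointlike}.
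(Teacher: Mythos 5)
Your proof is correct and follows essentially the same strategy as the paper: extract $\iota_Z P \otimes \omega_X \simeq \iota_Z P$ from the Serre functor condition via Proposition \ref{prop:formulas}, pass to cohomology sheaves, run a Hilbert polynomial argument against ampleness of $\omega_X^{\pm 1}$ restricted to a closed subset to force zero-dimensional support, and conclude with Lemma \ref{lem:pointlike}. The only cosmetic difference is that the paper restricts each $\sH^j$ to $Z$ itself (with reduced structure and a very ample power $N=\omega_X^{\otimes k}|_Z$), whereas you restrict to an irreducible component $W$ of $\mathrm{Supp}(\sH^j)$ and invoke Nakayama explicitly to preserve nonvanishing; both produce the same periodicity contradiction for the Hilbert polynomial.
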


\begin{proof}
Let $P$ be a point like object  in $\mathrm{  D}_Z^b(\mathrm{  Coh}(X))$
and denote by $\sH^j$ the cohomology sheaves of $\iota_Z P$. Note that the
$\sH^j$'s are coherent sheaves on $X$ supported in $Z$. Since $S_Z(P) \simeq P[ \dim X ]$, 
we have the following isomorphisms:
\begin{align*}
\iota_Z^!(\iota_Z P\otimes \omega_X ) \simeq P \\ 
\iota_Z \iota_Z^! (\iota_Z P\otimes \omega_X) \simeq \iota_Z P\\
 \iota_Z \iota_Z^!  \iota_Z P\otimes \omega_X  \simeq \iota_Z P\\
\iota_Z P\otimes \omega_X  \simeq \iota_Z P.
\end{align*}
 By taking cohomology we have 
\begin{align}\label{eq:tensorh}
\sH^j \otimes \omega_X  \simeq \sH^j \quad \mbox{ for all } j.
\end{align}
 Now we show that $\sH^j$ is supported in dimension zero for any $j$.
Suppose $\omega_X|_Z$ is ample, the other case being similar. Let $k>0$ be an integer such that  
$N:= \omega_X^{\otimes k}|_Z$ is very ample  and let 
$i\colon Z\hookrightarrow X$ be the inclusion map (here $Z$ is equipped with the reduced induced subscheme structure). 
Then the Hilbert polynomial of $P_{i^*\sH^j}(m)=\chi ( i^*\sH^j \otimes N^{\otimes m})$ has degree 
equal to 
$$s_j \; := \; \dim \mathrm{  Supp}(i^*\sH^j) = \dim \mathrm{  Supp}(\sH^j)$$ 
(\cite[p. 276, Proposition 6]{S}). 
Moreover, by tensoring \eqref{eq:tensorh} with positive  powers of $\omega_X$ 
and by restricting the isomorphisms  to $Z$, we find 
$$i^*\sH^j\otimes N \; \simeq \; i^*\sH^j\quad \mbox{ for all }j.$$
Therefore $P_{i^*\sH^j}(m)=P_{i^*\sH^j\otimes N}(m) = P_{i^*\sH^j} (m+1)$ for all $m \in \mathbb Z$  which  is impossible if $\deg P_{i^*\sH^j}>0$. 
Hence $s_j=0$ for all $j$ and  $\iota_Z P \simeq \cc(p)[r]$ for some  closed point $p\in Z$ and $r\in \mathbb Z$  by  Lemma \ref{lem:pointlike}. 
It follows that  $P\simeq \iota_Z^! \cc(p) [r]$.

\end{proof}

\begin{ex}\label{ex:R}
We construct further instances of equivalences between derived categories with support extending the equivalences \eqref{eq:toda}.
Denote by $$R \colon \mathrm{  Aut}^0(X) \times \mathrm{  Pic}^0(X) \to \mathrm{  Aut}^0(Y) \times \mathrm{  Pic}^0(Y)$$
  the Rouquier isomorphism induced by an equivalence
$F\colon \mathrm{  D}^b( \mathrm{   Coh}(X)) \to \mathrm{  D}^b(\mathrm{   Coh}(Y))$ (\emph{cf.} \cite[Th\'{e}orème 4.18]{R2} or \cite[Proposition 9.45]{H}, 
and \cite[p.531, footnote (1)]{PS}). By following \cite[Proposition 3.1]{L}, 
if $\alpha \in \Pic^0(X)$ is a topologically trivial  line bundle such that $H^0(X,\omega_X^{\otimes k_0}\otimes \alpha) \neq 0$ for some 
$k_0 \in \mathbb Z$, then $R(  {\rm id}_X , \alpha) =
( \mathrm{  id}_Y , \beta )$  for some $\beta\in \Pic^0(Y)$ and  moreover there are  isomorphisms
$R_k\colon H^0(X , \omega_X^{\otimes k} \otimes \alpha) \to H^0(Y, \omega_Y^{\otimes k}\otimes \beta)$ for all $k\in \mathbb Z$.
As in \cite{T}, one can prove that 
 if $E\in \big| \omega_X^{\otimes k} \otimes \alpha \big|$ and $E'=R_k(E) \in \big| \omega_Y^{\otimes k} \otimes \beta \big|$ 
is the corresponding divisor, then 
$F$ restricts to an equivalence of triangulated categories $\mathrm{  D}_{ \mathrm{  Supp}(E) }^b(\mathrm{   Coh}(X))\simeq 
\mathrm{  D}_{\mathrm{  Supp}(E')}^b(\mathrm{   Coh}(Y))$.

\end{ex}

\section{Gabriel's theorem with support}

We present a version of Gabriel's reconstruction theorem \cite{G}  with support.
Other variants of  Gabriel's result appear in \cite{Per} and \cite{CP}.

\begin{theorem}\label{thm:recabelian}
Let $X$ and $Y$ be  smooth projective varieties and let $Z\subset X$ and $W\subset Y$ be algebraic closed subsets.
If  there is an equivalence  of abelian categories  $\varphi \colon \mathrm{  Coh}_Z(X) \to \mathcal \mathrm{  Coh}_W(Y) $, then 
$Z(\cc)$ and $W(\cc)$ are homeomorphic.

\end{theorem}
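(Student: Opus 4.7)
The plan is to adapt Gabriel's classical reconstruction argument to the supported setting. The strategy has two stages: first, identify the simple objects of $\mathrm{Coh}_Z(X)$ with the closed points of $Z$ and transport this bijection through $\varphi$; second, describe the Zariski-closed subsets of $Z(\cc)$ purely categorically via supports, and conclude that the induced bijection is a homeomorphism.

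For the first stage, I would show that the simple objects of $\mathrm{Coh}_Z(X)$ are exactly the skyscraper sheaves $\cc(p)$ with $p\in Z(\cc)$. If $\sF\in \mathrm{Coh}_Z(X)$ is simple and $p\in \mathrm{Supp}(\sF)$, then the coherent subsheaf $\sI_p\sF\subset \sF$ is proper (otherwise Nakayama gives $\sF_p=0$), so by simplicity $\sI_p\sF=0$; this makes $\sF$ a coherent $\sO_X/\sI_p$-module, i.e.\ a finite-dimensional skyscraper at $p$, and simplicity forces $\sF\simeq \cc(p)$. Since every equivalence of abelian categories preserves simples and their isomorphism classes, $\varphi$ induces a bijection $f\colon Z(\cc)\to W(\cc)$ characterized by $\varphi(\cc(p))\simeq \cc(f(p))$.

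For the second stage, I would encode the topology through the Hom-criterion for support: for any $\sF \in \mathrm{Coh}_Z(X)$ and $p\in Z(\cc)$, Nakayama gives $p\in \mathrm{Supp}(\sF)$ if and only if $\mathrm{Hom}_{\mathrm{Coh}_Z(X)}(\sF,\cc(p))\neq 0$. Since the Zariski-closed subsets of $Z(\cc)$ are exactly the sets of the form $\mathrm{Supp}(\sF)\cap Z(\cc)$ (take $\sF=\sO_T$ for a closed subvariety $T\subset Z$), they are categorically described by
$$V(\sF) \; := \; \set{p\in Z(\cc) \given \mathrm{Hom}_{\mathrm{Coh}_Z(X)}(\sF,\cc(p))\neq 0}.$$
Because $\varphi$ is an equivalence, $\mathrm{Hom}(\sF,\cc(p))\simeq \mathrm{Hom}(\varphi(\sF),\cc(f(p)))$, hence $f(V(\sF)) = V(\varphi(\sF))$. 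Thus $f$ sends closed sets to closed sets; applying the same argument to $\varphi^{-1}$ shows $f^{-1}$ is closed, and $f$ is a homeomorphism.

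The only point that needs a brief check is the Hom-detection of support, where one must confirm that computing $\mathrm{Hom}$ inside $\mathrm{Coh}_Z(X)$ agrees with the ambient $\mathrm{Hom}$ in $\mathrm{Coh}(X)$. Since $\cc(p)\in \mathrm{Coh}_Z(X)$ whenever $p\in Z$ and the inclusion $\mathrm{Coh}_Z(X)\hookrightarrow \mathrm{Coh}(X)$ is fully faithful and exact, this is automatic. Beyond this small verification, the argument is a direct adaptation of Gabriel's template to the subcategory $\mathrm{Coh}_Z(X)$, so I do not anticipate any serious obstacle.
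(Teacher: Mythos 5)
Your proof is correct and takes essentially the same route as the paper: classify the simple objects of $\mathrm{Coh}_Z(X)$ as the skyscraper sheaves $\cc(p)$ with $p\in Z(\cc)$, transport this through $\varphi$ to get a bijection on closed points, and then detect the Zariski topology via the $\mathrm{Hom}$-criterion for support applied to structure sheaves of closed subvarieties. The only cosmetic difference is in the classification of simples --- you argue directly via Nakayama, while the paper reduces to the known classification in the ambient $\mathrm{Coh}(X)$ using that $\mathrm{Coh}_Z(X)$ is closed under subobjects and quotients --- and both are equally valid.
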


\begin{proof}

A sheaf $\sF\in \mathrm{  Coh}_Z(X) $ (\emph{resp.} in $\mathrm{  Coh}(X) $) 
is said \emph{indecomposable} if any non-trivial surjection $\sF \twoheadrightarrow \sG$ with $\sG$ in $\mathrm{  Coh}_Z(X) $ (\emph{resp.}  
in $\mathrm{  Coh}(X) $)
is an isomorphism. Recall that the indecomposable sheaves in $\mathrm{  Coh}(X) $ are the skyscraper  sheaves  of closed points $\cc(x)$. 
It follows that the indecomposable sheaves in $\mathrm{  Coh}_Z(X) $ are the sheaves $\iota_Z^! \cc(x)$ where  $x\in Z$ is a closed point.
Indeed, let $\sF \in \mathrm{  Coh}_Z(X)$ be an
indecomposable sheaf. It is enough to show that  
$\iota_Z \sF$ is indecomposable in $\mathrm{  Coh}(X) $. To this end, consider a short exact sequence of type
$$ 0 \to \sK \to \iota_Z \sF \to \sG \to 0$$ 
with $\sG$ non trivial    in $\mathrm{  Coh}(X) $. Under the full embedding $\mathrm{  Coh}(X) \hookrightarrow \mathrm{  D}^b ( \mathrm{  Coh}(X) )$, 
the above sequence
becomes a distinguished triangle $ \sK \to \iota_Z \sF \to \sG \to \sK[1]$. In turn the functor $\iota_Z^!$ induces 
  a distinguished triangle in $\mathrm{  D}^b_Z( \mathrm{  Coh}(X))$
$$\iota_Z^! \sK \to \sF \to \iota_Z^! G\to \iota_Z^!K[1].$$ 
Since both $\sK$ and $\sG$ have support in $Z$, we check that $\iota_Z^! \sK$ and $\iota_Z^!\sG$ 
are sheaves in $\mathrm{  Coh}_Z(X)$. 
Under the full embedding $\mathrm{  Coh}_Z(X) \hookrightarrow \mathrm{  D}^b( \mathrm{  Coh}_Z(X)) \simeq  \mathrm{  D}^b_Z( \mathrm{  Coh}(X))$, 
this distinguished triangle becomes 
 a short exact sequence $0 \to \iota_Z^! \sK \to \sF \to \iota_Z^! \sG \to 0$ with $\iota_Z^!\sG \neq 0$. 
Hence $\iota_Z^!\sK \simeq 0$ and consequently $\sK$ is trivial.

The equivalence $\varphi$ sends indecomposable sheaves to indecomposable sheaves. Hence we can define a function $f\colon Z(\cc)\to W(\cc)$ on closed points 
such that  $f(q)=p$ if and only if $\varphi(\iota_Z^!\cc(q)) = \iota_W^! \cc(p)$. Note that $f$ is bijective. Now we prove that 
$f$ is bicontinuous by following an idea of \cite{LLZ}.  

Denote by $\varphi^{-1}$ a quasi-inverse of $\varphi$ and 
let $W_0\subset W(\cc)$ be a closed subset. Hence $W_0=W'\cap W(\cc)$ for some   algebraic closed subset $W'  \subset W$. 
Since for every $q\in Z(\cc)$  
\begin{align*}
\mathrm{  Hom}( \varphi^{-1} (\iota_W^! \sO_{ W'} ) ,\iota_Z^! \cc(q)) \simeq \\
\mathrm{  Hom}( \iota_W^! \sO_{W'} , \varphi(\iota_Z^!\cc(q) ) ) \simeq \mathrm{  Hom}( \iota_W^! \sO_{ W'} , \iota_W^!\cc(f(q))  ) \simeq \\
\mathrm{  Hom}(  \sO_{ W'} , \cc(f(q))  ),
\end{align*}
we have
$$q \in {\rm Supp} ( \varphi^{-1} (\iota_W^! \sO_{ W'} )) \cap Z(\cc) \;  \Leftrightarrow \; f(q) \in  W' \cap W(\cc) = W_0.$$ 
It follows that $f^{-1}(W_0) = \mathrm{  Supp} (  \varphi^{-1} (\iota_W^! \sO_{ W'} ) ) \cap Z(\cc) $ is  closed  in $Z(\cc)$
and $f$ is continuous. 

Similarly, also $f^{-1}$ is continuous. This goes as follows. 
Let $Z_0=Z'\cap  Z(\cc)$ be   a closed subset in $Z(\cc)$ where $Z'$ is closed in $Z$. Note
$$\mathrm{  Hom}(  \sO_{ Z' } ,  \cc(q) )  \simeq \mathrm{  Hom}( \iota_Z^! \sO_{ Z'} , \iota_Z^! \cc(q) ) \simeq 
\mathrm{  Hom} ( \varphi( \iota_Z^! \sO_{ Z'} ) , \iota_W^! \cc(f(q)) ) \quad \mbox{ for every } \quad q \in Z(\cc).$$ 
It follows  
$$f(q) \in \mathrm{  Supp} ( \varphi (\iota_Z^! \sO_{Z'} )) \cap W(\cc)  \; \Leftrightarrow \; q \in  Z'  \cap Z(\cc) =Z_0.$$ 
Hence
$f(Z_0) = \mathrm{  Supp} ( \varphi ( \iota_Z^! \sO_{ Z' }) ) \cap W(\cc) $ is  closed   in $W(\cc)$.

\end{proof}

\section{ Reconstruction of the support, I}

We aim to prove Theorem \ref{thm:intr1}.
Denote by
$F \colon \mathrm{  D}_Z^b(\mathrm{  Coh}(X)) \to \mathrm{  D}_W^b(\mathrm{  Coh}(Y))$  an equivalence
and let $p\in W(\cc)$. Then there exists an object $E_p$ in $\mathrm{  D}_Z^b(\mathrm{  Coh}(X))$ such that 
$F(E_p) = \iota_W^! \cc(p)$. By Proposition \ref{prop:preservesplo}, 
 $E_p$ is a point like object and by Proposition \ref{prop:pointlikeample} $E_p \simeq \iota_Z^! \cc(q)[m_p]$ for some 
$q\in Z(\cc)$  and $m_p\in \mathbb Z$.
 Define $U(\cc)\subset Z(\cc)$ as the set of closed  points $q$ such that $F(\iota_Z^! \cc(q)) \simeq \iota_W^!\cc(p)[s_p]$ for some 
    $p\in W(\cc)$ and $s_p \in \mathbb Z$.
In this way 
we can define a surjective function $f\colon U(\cc) \to W(\cc)$ such that $f(q)=p$ if and only if $F(\iota_Z^!\cc(q))=\iota_W^!\cc(p)[s_p]$.

\begin{prop}
The function $f$ is injective.
\end{prop}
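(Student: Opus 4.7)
The plan is very short, essentially an application of what has already been set up. Suppose $q, q' \in U(\cc)$ both map to the same point $p \in W(\cc)$, so that by definition
\[
F(\iota_Z^!\cc(q)) \simeq \iota_W^!\cc(p)[s_p] \quad \text{and} \quad F(\iota_Z^!\cc(q')) \simeq \iota_W^!\cc(p)[s_p'].
\]
Composing the second isomorphism (shifted by $s_p - s_p'$) with the inverse of the first and applying $F^{-1}$, I obtain an isomorphism in $\mathrm{D}_Z^b(\mathrm{Coh}(X))$
\[
\iota_Z^!\cc(q) \simeq \iota_Z^!\cc(q')[s_p - s_p'].
\]

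Next I apply the inclusion $\iota_Z$ and invoke Proposition \ref{prop:formulas} (i): since both $\cc(q)$ and $\cc(q')$ are supported on $Z$, we have $\iota_Z\iota_Z^!\cc(q) \simeq \cc(q)$ and $\iota_Z\iota_Z^!\cc(q') \simeq \cc(q')$. This yields an isomorphism in $\mathrm{D}^b(\mathrm{Coh}(X))$
\[
\cc(q) \simeq \cc(q')[s_p - s_p'].
\]
Taking cohomology sheaves, the left-hand side is concentrated in degree $0$, forcing $s_p = s_p'$ and hence $\cc(q) \simeq \cc(q')$ as sheaves. Comparing supports gives $\{q\} = \{q'\}$, so $q = q'$, proving injectivity.

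There is no real obstacle: the statement is essentially a corollary of the fully faithfulness of $F$ combined with the identity $\iota_Z\iota_Z^!\cc(q) \simeq \cc(q)$ guaranteed by Proposition \ref{prop:formulas} (i). The only small subtlety worth flagging is that one must remember to track the shifts $s_p, s_p'$ through the argument and verify that they agree, which follows automatically once the isomorphism is pushed down to skyscraper sheaves in $\mathrm{D}^b(\mathrm{Coh}(X))$.
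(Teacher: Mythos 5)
Your proof is correct and follows essentially the same route as the paper: use full faithfulness of $F$ to deduce $\iota_Z^!\cc(q)$ and $\iota_Z^!\cc(q')$ are isomorphic up to shift, then compare supports to conclude $q=q'$. The paper states the support comparison more tersely, while you spell out the intermediate step of applying $\iota_Z$ and invoking Proposition~\ref{prop:formulas}~(i) to reduce to honest skyscraper sheaves; the content is the same.
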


\begin{proof}
Suppose there exist $q,q'\in U(\cc)$ such that $f(q)=f(q')$. Then there exist $m_1,m_2\in \mathbb Z$ such that 
$F(\iota_Z^! \cc(q)[m_1]) = F ( \iota_Z^! \cc(q')[m_2] )$ and $\iota_Z^! \cc(q)[m_1]  \simeq  \iota_Z^! \cc(q')[m_2]$. By looking at the supports it results
$q=q'$.

\end{proof}

\begin{prop}
The set $U(\cc)$ equals  $Z(\cc)$.
\end{prop}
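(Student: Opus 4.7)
The plan is to fix an arbitrary $q \in Z(\cc)$, set $\mathcal{E} := F(\iota_Z^!\cc(q))$, and analyze $\mathrm{Supp}(\mathcal{E}) \subset W$ using the bijection $f \colon U(\cc) \to W(\cc)$ established in the two preceding propositions. The crux will be to show that $\mathrm{Supp}(\mathcal{E})$ contains no closed point of $W$ whenever $q \notin U(\cc)$, which will contradict $\mathcal{E} \neq 0$.

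First I would recall that by Section 2 and the $\iota_W \dashv \iota_W^!$ adjunction, a closed point $p \in Y$ lies in $\mathrm{Supp}(\mathcal{E})$ if and only if $\mathrm{RHom}_{\mathrm{D}^b_W(\mathrm{Coh}(Y))}(\mathcal{E}, \iota_W^!\cc(p)) \neq 0$. Since $F$ is an equivalence, this group identifies with
\[ \mathrm{RHom}_{\mathrm{D}^b_Z(\mathrm{Coh}(X))}(\iota_Z^!\cc(q),\, F^{-1}(\iota_W^!\cc(p))). \]
By surjectivity of $f$ and the defining property of $U(\cc)$, there exist $q_p := f^{-1}(p) \in U(\cc)$ and $s_p \in \mathbb{Z}$ with $F^{-1}(\iota_W^!\cc(p)) \simeq \iota_Z^!\cc(q_p)[-s_p]$. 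Then the $\iota_Z \dashv \iota_Z^!$ adjunction together with the isomorphism $\iota_Z\iota_Z^!\cc(q) \simeq \cc(q)$ from Proposition \ref{prop:formulas}(i) (applicable because $\{q\} \subset Z$) collapses the expression to $\mathrm{RHom}_X(\cc(q), \cc(q_p)[-s_p])$, which vanishes unless $q = q_p$.

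Consequently, if $q \notin U(\cc)$ then $q \neq q_p$ for every $p \in W(\cc)$, so $\mathrm{Supp}(\mathcal{E})$ meets $W(\cc)$ trivially. On the other hand $\iota_Z^!\cc(q) \not\simeq 0$ and $F$ is an equivalence, so $\mathcal{E} \neq 0$; hence some cohomology sheaf $\mathcal{H}^j(\mathcal{E})$ is a non-zero coherent sheaf on $Y$ supported in $W$, whose support necessarily contains closed points. This contradiction forces $q \in U(\cc)$, and combined with the trivial inclusion $U(\cc) \subset Z(\cc)$ this gives the desired equality.

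The main technical point is simply to chain the two adjunctions correctly and to confirm that the categorically defined support in $\mathrm{D}^b_W(\mathrm{Coh}(Y))$ is detected by Hom-pairings with the $\iota_W^!$-twists of skyscrapers, a verification already implicit in Section 2; the actual content of the argument is that the bijection $f$ already accounts for every closed point of $W$, leaving no room for extra points of $Z(\cc)$ outside $U(\cc)$.
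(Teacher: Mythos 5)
Your argument is correct and is essentially the paper's own proof: assume $q \notin U(\cc)$, use the bijection $f$ to write $F^{-1}(\iota_W^!\cc(p))$ as a shift of $\iota_Z^!\cc(q_p)$ for $q_p = f^{-1}(p) \in U(\cc)$, observe via the $\iota_Z \dashv \iota_Z^!$ adjunction that $\mathrm{RHom}(\cc(q),\cc(q_p)[-s_p]) = 0$ since $q \neq q_p$, conclude $\mathrm{Supp}(F(\iota_Z^!\cc(q)))$ is empty, and derive a contradiction with $\iota_Z^!\cc(q) \neq 0$. The only cosmetic difference is that you transport the Hom group to $\mathrm{D}^b_Z(\mathrm{Coh}(X))$ by applying $F^{-1}$ to the second argument, whereas the paper substitutes $\iota_W^!\cc(p) \simeq F(\iota_Z^!\cc(f^{-1}(p))[-s_p])$ directly; the content is identical.
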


\begin{proof}
 
Assume by contradiction there exists $q' \in Z(\cc) \backslash U(\cc)$. If  $p\in W(\cc)$  is any point, then $f^{-1}(p)\in U$ and 
\begin{align*}
\mathrm{  RHom}( F (\iota_Z^! \cc(q') ) , \iota_W^! \cc(p) ) \simeq \\
\mathrm{  RHom}( F (\iota_Z^! \cc(q') ) , F  (\iota_Z^! \cc(f^{-1}(p)) [-s_p]))\simeq  \\ 
\mathrm{  RHom}(  \cc(q')  , \cc(f^{-1}(p))[-s_p] ) \; = \;  0.
\end{align*}
It follows that $\mathrm{  Supp} ( F( \iota_Z^!  \cc(q') ) )   = \emptyset$ and $\iota_Z^! \cc(q') \simeq 0$.

\end{proof}

\begin{prop}
The   functions $f$ and $f^{-1}$ are continuous.
\end{prop}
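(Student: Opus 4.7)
The plan is to mimic the continuity argument in the proof of Theorem \ref{thm:recabelian}, now transported through the derived equivalence $F$. The key observation is that any closed subset $W_0 \subset W(\cc)$ arises as $W_0 = W' \cap W(\cc)$ for some algebraic closed subset $W' \subset W$, and the object $\iota_W^! \sO_{W'}$ serves as a \emph{test object} whose preimage under $F$ detects $f^{-1}(W_0)$; reversing the roles of $F$ and $F^{-1}$ then handles the continuity of $f^{-1}$.

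Concretely, I would set $G := F^{-1}(\iota_W^! \sO_{W'}) \in \mathrm{D}_Z^b(\mathrm{Coh}(X))$. Combining the support criterion from \S 2 with the adjunction $\iota_Z \dashv \iota_Z^!$, for every $q \in Z(\cc) = U(\cc)$ we have
\[
q \in \mathrm{Supp}(\iota_Z G) \iff \mathrm{RHom}(\iota_Z G, \cc(q)) \neq 0 \iff \mathrm{RHom}_{\mathrm{D}_Z^b}(G, \iota_Z^! \cc(q)) \neq 0.
\]
Applying the equivalence $F$ and using the defining relation $F(\iota_Z^!\cc(q)) \simeq \iota_W^!\cc(f(q))[s_{f(q)}]$, this last condition becomes non-vanishing of $\mathrm{RHom}_{\mathrm{D}_W^b}(\iota_W^!\sO_{W'}, \iota_W^!\cc(f(q))[s_{f(q)}])$, which, via Proposition \ref{prop:formulas}(i) (both $\sO_{W'}$ and $\cc(f(q))$ are supported in $W$), collapses to $\mathrm{RHom}(\sO_{W'}, \cc(f(q))[s_{f(q)}])$. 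The latter is non-zero iff $f(q) \in W'$, that is, iff $q \in f^{-1}(W_0)$. Therefore $f^{-1}(W_0) = \mathrm{Supp}(\iota_Z G) \cap Z(\cc)$ is closed, proving that $f$ is continuous.

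For $f^{-1}$, I would repeat the argument symmetrically: given a closed subset $Z_0 = Z' \cap Z(\cc)$ with $Z' \subset Z$ algebraic and closed, take the test object $F(\iota_Z^!\sO_{Z'}) \in \mathrm{D}_W^b(\mathrm{Coh}(Y))$ and compute, exactly as above, that $p \in \mathrm{Supp}(\iota_W F(\iota_Z^!\sO_{Z'}))$ iff $f^{-1}(p) \in Z'$; hence $f(Z_0)$ is closed in $W(\cc)$. The only subtlety I anticipate is that the shift $s_{f(q)}$ depends on $q$, but since non-vanishing of the graded Hom complex is shift-invariant the chain of equivalences goes through uniformly, and I do not foresee any further obstacle.
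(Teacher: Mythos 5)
Your proof is correct and follows essentially the same route as the paper: test objects $\iota_W^!\sO_{W'}$ and $F(\iota_Z^!\sO_{Z'})$, the support criterion $\mathrm{RHom}(\sF,\cc(q))\neq 0 \Leftrightarrow q\in\mathrm{Supp}(\sF)$, adjunction $\iota_Z\dashv\iota_Z^!$, and the equivalence to transfer Hom's. Your explicit remark that the shift $s_{f(q)}$ is harmless because non-vanishing of $\mathrm{RHom}$ is shift-invariant is a correct observation and in fact makes a point the paper's notation glosses over.
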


\begin{proof}

Let $W_0\subset W$ be a closed subset.
Since for every $q\in Z(\cc)$ 
\begin{align*}
\mathrm{  RHom}( F^{-1} (\iota_W^! \sO_{ W_0} ) ,\iota_Z^! \cc(q)) \; \simeq  \; 
\mathrm{  RHom}( \iota_W^! \sO_{W_0} , F(\iota_Z^!\cc(q) ) ) \; \simeq \;
\mathrm{  RHom}( \iota_W^! \sO_{ W_0} , \iota_W^!\cc(f(q))  ),
\end{align*}
we have
$$q \in \mathrm{  Supp} ( F^{-1} (\iota_W^! \sO_{ W_0} ) )  \cap Z(\cc) \; \Leftrightarrow \;  f(q) \in  W_0 \cap W(\cc).$$ 
It follows that $f^{-1}(W_0 \cap W(\cc) )= \mathrm{  Supp} (  F^{-1} (\iota_W^! \sO_{ W_0 } )  )  \cap Z(\cc)  $ is closed in $Z(\cc)$
and $f$ is continuous. 

Now we show that $f^{-1}$ is continuous. 
Let $Z_0 \subset Z$ be a closed subset and note the isomorphism
$$\mathrm{  RHom}( \iota_Z^! \sO_{ Z_0 } , \iota_Z^! \cc(q) ) \simeq \mathrm{  RHom} (F( \iota_Z^! \sO_{ Z_0 } ) , \iota_W^! \cc(f(q)) )
\quad \mbox{ for every } \quad q\in Z(\cc).$$ 
Then it follows that 
$$f(q) \in \mathrm{  Supp} ( F (\iota_Z^! \sO_{Z_0} )  ) \cap W(\cc) \;  \Leftrightarrow  \; q \in  Z_0 \cap Z(\cc) $$ 
and 
$f(Z_0 \cap Z(\cc) ) = \mathrm{  Supp} ( F( \iota_Z^! \sO_{ Z_0 }  ) )  \cap W(\cc)$  is closed in $W(\cc)$.

\end{proof}

\section{ Reconstruction of the support, II}

We aim to prove Theorem \ref{thm:intr2}. The proof is divided into four steps and it follows the general strategy of \cite[\S3]{LLZ}.
 
\subsection{Step 1}
We need the following characterization of skyscraper sheaves.

\begin{prop}\label{prop:charstr}
Suppose $X$ is a smooth projective variety and  let $\sO_X(1)$  be a   very ample line bundle.
Suppose $Z \subset X$ is an algebraic closed subset and 
let $E$ be a simple object in $\mathrm{  D}^b_Z ( {\rm  Coh}(X))$ such that 
$\mathrm{  Hom}_{{\rm D}^b_Z ( \mathrm{   Coh}(X))}(E , E[j] ) = 0$ for $j<0$.
If there exist non-negative integers $a_0 , \ldots , a_{\dim X}$ such that 
$$\mathrm{  RHom}_{\mathrm{  D}^b_Z ( \mathrm{   Coh}(X))}(\iota_Z^!  ( \sO_Z (-m) )  ,  E) \; \simeq \; 
\bigoplus_{k=0}^{\dim X} \mathbb C^{\oplus a_k}[-k] $$ for $m\gg 0$, 
then $E \simeq \iota_Z^! \cc(x)[r]$  for some  closed point  $x\in Z$ and $r\in \mathbb Z$.
\end{prop}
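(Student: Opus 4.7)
The plan is to show that $\iota_Z E\in \mathrm{D}^b(\mathrm{Coh}(X))$ has $0$-dimensional support; once this is established, Lemma \ref{lem:pointlike} applied to $\iota_Z E$ — whose $\mathrm{Hom}$-conditions transfer from $E$ by full faithfulness of $\iota_Z$ — will give $\iota_Z E\simeq \cc(x)[r]$ for some closed point $x\in Z$ and $r\in \mathbb Z$, and then applying $\iota_Z^!$ together with the natural isomorphism $E\simeq \iota_Z^!\iota_Z E$ (Proposition \ref{prop:formulas}~(i) and full faithfulness) will yield $E\simeq \iota_Z^!\cc(x)[r]$. First I would reformulate the hypothesis: since $\sO_Z(-m)$ is supported in $Z$ and $\iota_Z$ is fully faithful, adjunction combined with Proposition \ref{prop:formulas}~(i) rewrites it as
$$\mathrm{RHom}_X(\sO_Z(-m), \iota_Z E) \; \simeq \; \bigoplus_{k=0}^{\dim X} \cc^{\oplus a_k}[-k] \qquad \text{for } m\gg 0.$$

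The key geometric input I would establish is that for any coherent sheaf $\sF$ on $X$ with $\mathrm{Supp}(\sF)\subset Z$, the sheaf $\mathcal{H}om_X(\sO_Z, \sF)$ has support equal to $\mathrm{Supp}(\sF)$: at any $x\in \mathrm{Supp}(\sF)$ the minimal $k\geq 1$ with $\mathcal{I}_{Z,x}^k\sF_x=0$ yields a nonzero $\sO_{Z,x}$-submodule $\mathcal{I}_{Z,x}^{k-1}\sF_x\subset \sF_x$, hence a nonzero element of $\mathrm{Hom}_{\sO_{X,x}}(\sO_{Z,x}, \sF_x)$. Coupled with Serre vanishing for $m\gg 0$, the local-to-global spectral sequence collapses to $\mathrm{Ext}^p_X(\sO_Z, \sG(m)) \simeq H^0(X, \mathcal{E}xt^p_X(\sO_Z, \sG)(m))$ for any coherent $\sG$ on $X$.

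Let $q_{\min}$ be the smallest $q$ with $\sH^q(\iota_Z E)\neq 0$. In the hypercohomology spectral sequence
$$E_2^{p,q} \; = \; \mathrm{Ext}^p_X(\sO_Z, \sH^q(\iota_Z E)(m)) \; \Longrightarrow \; \mathrm{Ext}^{p+q}_X(\sO_Z, \iota_Z E(m)),$$
all differentials touching $E_r^{0,q_{\min}}$ land on or come from terms with $p<0$ or $q<q_{\min}$, all of which vanish identically, so $E_\infty^{0,q_{\min}}=E_2^{0,q_{\min}}$ exhausts $\mathrm{Ext}^{q_{\min}}(\sO_Z(-m), \iota_Z E)$. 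The hypothesis together with the previous paragraph then forces
$$h^0\bigl(X,\, \mathcal{H}om_X(\sO_Z, \sH^{q_{\min}}(\iota_Z E))(m)\bigr) \; = \; a_{q_{\min}} \qquad \text{for } m\gg 0,$$
whose left-hand side is the Hilbert polynomial in $m$ of a sheaf whose support coincides with $\mathrm{Supp}(\sH^{q_{\min}}(\iota_Z E))$; constancy forces this dimension to be $0$. The remaining cohomology sheaves I would handle by induction on $q-q_{\min}$ using the truncation triangle $\tau_{\leq q-1}(\iota_Z E)\to \iota_Z E\to \tau_{\geq q}(\iota_Z E)$: once all prior cohomology sheaves have $0$-dimensional support, the dimensions $\dim \mathrm{Ext}^\bullet_X(\sO_Z(-m), \tau_{\leq q-1}(\iota_Z E))$ become independent of $m$ (tensoring with $\sO_X(m)$ is non-canonically trivial over a finite point set), the long exact sequence in $\mathrm{Ext}$ bounds $\dim \mathrm{Ext}^\bullet_X(\sO_Z(-m), \tau_{\geq q}(\iota_Z E))$ uniformly in $m$, and re-running the corner argument on the lowest cohomology of $\tau_{\geq q}(\iota_Z E)$ concludes that $\sH^q(\iota_Z E)$ also has $0$-dimensional support.

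The main obstacle I anticipate is this inductive bookkeeping through the truncation triangles: making sure the uniform bound on Ext-dimensions propagates cleanly from $\iota_Z E$ to each smart truncation after quotienting out the already-controlled lower cohomology, and ensuring the stalk-level identity $\mathrm{Supp}(\mathcal{H}om_X(\sO_Z, \cdot))=\mathrm{Supp}(\cdot)$ goes through without any reducedness hypothesis on $Z$. Once every $\sH^q(\iota_Z E)$ is $0$-dimensional, $\iota_Z E$ itself is, and Lemma \ref{lem:pointlike} closes the argument as above.
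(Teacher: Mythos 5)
Your argument is correct, but it takes a genuinely different route from the paper. The paper rewrites $\mathrm{RHom}(\sO_Z(-m),\iota_Z E)$ via the derived dual as $\mathrm{RHom}(\sO_X,\,(i_*\sO_Z)^{\mathrm{R}\vee}\otimes^{\mathrm L}\iota_Z E\otimes\sO_X(m))$ and runs the hypercohomology spectral sequence over the cohomology sheaves of the complex $(i_*\sO_Z)^{\mathrm{R}\vee}\otimes^{\mathrm L}\iota_Z E$. Because the $E_2$-page there consists of sheaf cohomology groups $H^p(X,\sH^q\otimes\sO_X(m))$, Serre vanishing kills the entire $p\neq 0$ column for $m\gg 0$ and the spectral sequence degenerates completely in one step; the passage to $\Supp(\iota_Z E)$ then uses the tensor support formula $\Supp(A\otimes^{\mathrm L} B)=\Supp A\cap\Supp B$ together with $\Supp((i_*\sO_Z)^{\mathrm{R}\vee})=Z$. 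You instead keep the "covariant" spectral sequence $E_2^{p,q}=\mathrm{Ext}^p(\sO_Z,\sH^q(\iota_Z E)(m))$, which does \emph{not} fully degenerate for $m\gg 0$ (the positive-$p$ terms are genuine $\mathrm{Ext}$'s against the non-locally-free sheaf $\sO_Z$, not sheaf cohomology), so you are forced into the corner-plus-induction argument via truncation triangles; your geometric substitute for the tensor support formula is the stalk-level identity $\Supp(\mathcal{H}om_X(\sO_Z,\sF))=\Supp\sF$ for coherent $\sF$ supported in $Z$, which you prove correctly via the minimal power of $\mathcal I_{Z,x}$ killing $\sF_x$ (and which indeed requires no reducedness beyond what you use). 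Both proofs end at the same Hilbert polynomial constancy argument and Lemma \ref{lem:pointlike}. The paper's dualization is the cleaner mechanism — it collapses the spectral sequence in one stroke and avoids the inductive bookkeeping you rightly flag as the delicate part — but your corner/truncation route closes: the key observations that tensoring an object supported at finitely many points by $\sO_X(m)$ gives an isomorphic object, and that a uniform bound on $h^0(\mathcal{H}om(\sO_Z,\sH^q)(m))$ for $m\gg 0$ forces degree zero Hilbert polynomial, are exactly what is needed to propagate through the triangles.
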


\begin{proof}
Denote by $i\colon Z \hookrightarrow X$ the inclusion map and note the isomorphisms:
\begin{align*}
\mathrm{  Hom}_{{\rm D}^b_Z ( \mathrm{   Coh}(X))} (\iota_Z^!  i_* \sO_Z (-m)   ,  E) \simeq \\
\mathrm{  Hom}_{{\rm D}^b ( \mathrm{   Coh}(X))}( i_* \sO_Z (-m) ,  \iota_Z E  )\simeq \\
%\mathrm{  Hom}_{{\rm D}^b ( \mathrm{   Coh}(X))}( \sO_X(-m) ,  ( i_*\sO_Z)^{\stackrel{ {\rm R}}{\vee}} \stackrel{ \mathrm{  L}}{\otimes }  \iota_Z E  ) 
%\simeq \\
\mathrm{  Hom}_{{\rm D}^b ( \mathrm{   Coh}(X))}( \sO_X ,  ( i_*\sO_Z)^{\stackrel{ {\rm R}}{\vee}} \stackrel{\mathrm{  L}}{\otimes }  \iota_Z E \otimes \sO_X(m) ) 
%\mathrm{  Hom}_{{\rm D}^b_Z ( \mathrm{   Coh}(X))}( \sO_Z  ,  i^{\times} (  \iota_Z E ) \otimes \sO_Z(m) ). 
\end{align*}
Let $\sH^q$ be the $q$-th cohomology of  the complex
$( i_*\sO_Z)^{\stackrel{ \mathrm{  R}}{\vee}} \stackrel{\mathrm{  L}}{\otimes }  \iota_Z E $
and consider the spectral sequence
\begin{align*}
E_2^{p,q}  \; = \;  \mathrm{  Hom}_{\mathrm{  D}^b ( \mathrm{   Coh}(X))} ( \sO_X ,\sH^q \otimes \sO_X(m)[p] ) \; \Rightarrow  \; \\
\mathrm{  Hom}_{\mathrm{  D}^b ( \mathrm{   Coh}(X))} ( \sO_X , ( i_*\sO_Z)^{\stackrel{ {\mathrm{  R}}}{\vee}} \stackrel{\mathrm{  L}}{\otimes } 
 \iota_Z E \otimes \sO_X(m)[p+q] ) 
\end{align*}
For $m\gg 0$ the elements $E_2^{p,q}$ are trivial for all $p \neq 0$ and all $q$. Hence the spectral sequence degenerates at the second page 
and 
$$
H^0( X , \sH^q \otimes \sO_X(m) ) \simeq 
\mathrm{  Hom}_{\mathrm{  D}^b ( \mathrm{   Coh}(X))} ( \sO_X , ( i_*\sO_Z)^{\stackrel{ \mathrm{  R}}{\vee}} \stackrel{\mathrm{  L}}{\otimes }  \iota_Z E \otimes \sO_X(m)[q] ) 
$$
for all $q$. By hypotheses, the right hand side of the above isomorphisms vanish for $q < 0$ or $q >\dim X$, while in middle degrees we have
$$H^0( X , \sH^q \otimes \sO_X(m) )  \simeq \mathrm{  Hom}_{ \mathrm{  D}^b_Z ( \mathrm{   Coh}(X)) } (\iota_Z^!  ( \sO_Z (-m) )  ,  E[q]) \simeq  
\mathbb C^{\oplus a_q}$$ 
for all $m \gg 0$.
Moreover we can suppose $\sH^q \otimes \sO_X(m)$ is globally generated for $m\gg 0$ and any $q$. Therefore for every $q$ the sheaf $\sH^q$ is either trivial 
or supported on a finite set of points. In fact, if $\sH^q\neq 0$ and $s=\dim \mathrm{  Supp} (\sH^q)$, then by \cite[p. 276, Proposition 6]{S}
the Hilbert polynomial $P_{\sH^q}(m) := \chi (\sH^q \otimes \sO_X(m))$ has degree equal to $s$ and 
$$P_{\sH^q}(m)  = h^0 ( X , \sH^q \otimes \sO_X(m))=a_q \quad \mbox{ for all } \quad m\gg 0.$$ 
This forces 	$s=0$ and 
\begin{equation}\label{eq:suppdim}
\dim \big(  \mathrm{  Supp} ( ( i_*\sO_Z)^{\stackrel{ \mathrm{  R}}{\vee}} \stackrel{\mathrm{  L}}{\otimes }  \iota_Z E ) \big) \leq 0.
\end{equation} 
On the other hand, there are equalities of sets 
\begin{equation}\label{eq:supp1} 
 \mathrm{  Supp} ( ( i_*\sO_Z)^{\stackrel{ \mathrm{  R}}{\vee}} \stackrel{\mathrm{  L}}{\otimes }  \iota_Z E ) \;   = \;
 \mathrm{  Supp} ( ( i_*\sO_Z)^{\stackrel{ \mathrm{  R}}{\vee}} ) \cap \mathrm{  supp}( \iota_Z E)
\end{equation} 
  and 
\begin{equation}\label{eq:suppdual} 
 \mathrm{  Supp} ( ( i_*\sO_Z)^{\stackrel{ \mathrm{  R}}{\vee}} )\; = \; \mathrm{  Supp} (i_*\sO_Z)=Z.
 \end{equation}
 In order to check \eqref{eq:suppdual},  for a closed point $x\in X$ it results 
 $$\mathrm{  RHom} (( i_*\sO_Z)^{\stackrel{ \mathrm{  R}}{\vee}} , \cc(x)) \; \simeq  \; \mathrm{  RHom}( \sO_X , i_*\sO_Z \stackrel{\mathrm{  L}}{\otimes}
   \cc(x)),$$ and 
 therefore
 $$x\in \mathrm{  Supp}(( i_*\sO_Z)^{\stackrel{ \mathrm{  R}}{\vee}} ) \cap X(\cc) \;  \Leftrightarrow \;  x\in Z(\cc)$$ 
 (here we have employed the degeneration of the spectral sequence 
 $$E_2^{'p',q'}= \mathrm{  Hom}(\sO_X , \sH^{q'} (i_*\sO_Z \stackrel{\mathrm{  L}}{\otimes } \cc(x))[p'])
 \Rightarrow \mathrm{  Hom}(\sO_X , i_*\sO_Z \stackrel{\mathrm{  L}}{\otimes } \cc(x)[p'+q']) ).$$
By putting together \eqref{eq:suppdim}, \eqref{eq:supp1} and \eqref{eq:suppdual}, 
we conclude  that  $\dim \mathrm{  Supp}(\iota_Z E)  \leq 0$. 
The conclusion follows by Lemma \ref{lem:pointlike}.
 
\end{proof}

\subsection{Step 2}
By hypotheses we may suppose $F(\iota_Z^! \cc(x)) = \iota_W^! \cc(y)$ for some  closed points  $x\in Z(\cc)$ and $y\in W (\cc)$.

There are isomorphisms
\begin{align*}
\mathrm{  RHom}_{ \mathrm{  D}_W^b(\mathrm{  Coh}(Y))  } (F(\iota_Z^!  \sO_Z ) ,   \iota_W^!  \cc(y)) \simeq 
\mathrm{  RHom}_{\mathrm{  D}_W^b(\mathrm{  Coh}(Y))  } ( F ( \iota_Z^!  \sO_Z ) ,  F( \iota_Z^!  \cc(x)) )  \simeq \\
\mathrm{  RHom}_{\mathrm{  D}_Z^b( \mathrm{  Coh}(X))  } (\iota_Z^!  \sO_Z  ,  \iota_Z^!  \cc(x) )  \simeq   
\mathrm{  RHom}_{\mathrm{  D}^b(\mathrm{  Coh}(X))  } ( \sO_Z  ,   \cc(x) )   \simeq 
\bigoplus_{k=0}^{\dim X} \mathbb C^{\oplus a_k}[-k] 
\end{align*}
for some integers $a_0, \ldots , a_{\dim X}\geq 0$.

By semi-continuity, there exists an algebraic open dense subset $W' \subset W$ containing $y$ such that for every closed point $p\in W'$:
$$\mathrm{  RHom} ( F ( \iota_Z^!  \sO_Z ) ,   \iota_W^!  \cc(p) ) \simeq \bigoplus_{k=0}^{\dim X} \mathbb C^{\oplus b^p_k}[-k]$$ for some integers 
$b^p_0 , \ldots , b_{\dim X}^p$ such that $0\leq b_k^p \leq a_k$ for any $k=0, \ldots , \dim X$.

Fix an embedding $X\hookrightarrow \mathbb P^N$ and 
let 
$$P_0\subset P_1 \subset P_2 \subset \ldots \subset P_{\dim Z}= \mathbb P^N$$ be a flag of subspaces such that 
$P_j \simeq \mathbb P^{N -\dim Z + j}$ and $Z_j := Z \cap P_j$ is of dimension $j$ for every $0\leq j\leq \dim Z$. 
Moreover, the $Z_j$'s are  irreducible  except 
possibly  $Z_0$ (\cite[Theoreme I.6.10]{J}).
 We can also  assume $x\notin Z_j$ for all $j<\dim Z$. 
Then for any $j<\dim Z$ we have
\begin{align*}
\mathrm{  RHom}_{ \mathrm{  D}_W^b (\mathrm{  Coh}(Y)) } ( F (\iota_Z^!  \sO_{Z_j} ) , \iota_W^! \cc(y)  )  \simeq \\
\mathrm{  RHom}_{ \mathrm{  D}_W^b (\mathrm{  Coh}(Y)) } ( F (\iota_Z^!  \sO_{Z_j} ) , F(\iota_Z^! \cc(x) ) ) \simeq  \\
\mathrm{  RHom}_{ \mathrm{  D}_Z^b (\mathrm{  Coh}(X)) } (\iota_Z^! \sO_{Z_j} , \iota_Z^! \cc(x)) \simeq \\
\mathrm{  RHom}_{ \mathrm{  D}^b (\mathrm{  Coh}(X)) } (\sO_{Z_j} , \cc(x) ) = 0
\end{align*} 
%as ${\rm Ext}_{ {\rm Coh}(X) }^k (\sO_{Z_j} , \sO_x ) = 0$ for all $k$. 
so that  $y\notin \mathrm{  Supp} ( F(\iota_Z^! \sO_{Z_j}) )$.
Set $S= \bigcup_{0\leq j<\dim Z} \mathrm{  Supp} ( F(\iota_Z^! \sO_{Z_j}) ) \subset W$. The set  $S$ is a closed subset and does not contain $y$.
Also define the following open dense subset of $W$: $$V:= W' \backslash S.$$

\begin{prop}\label{prop:tensorm}
For every $p\in V$ closed point  and $m\in \mathbb Z$ we have
$$\mathrm{  RHom} ( F ( \iota_Z^!  \sO_Z (m) ) , \iota_W^! \cc(p) ) \simeq \bigoplus_{k=0}^{\dim X} \mathbb C^{\oplus b^p_k}[-k].$$  
\end{prop}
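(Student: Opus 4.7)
The plan is to argue by induction along the flag $P_0\subset P_1\subset \cdots \subset P_{\dim Z}$ using hyperplane section sequences. After possibly refining the flag, no $P_{j-1}$ contains any component of $Z_j$, so $Z_{j-1}$ is an effective Cartier divisor on $Z_j$ cut out by the equation of $P_{j-1}$ in $P_j$. Twisting the resulting exact sequence by $\sO_X(n)$ yields, for every $1\leq j\leq \dim Z$ and every $n\in \mathbb Z$, a short exact sequence in $\mathrm{Coh}_Z(X)$
\[
0 \to \sO_{Z_j}(n-1) \to \sO_{Z_j}(n) \to \sO_{Z_{j-1}}(n) \to 0,
\]
and hence a distinguished triangle in $\mathrm{D}_Z^b(\mathrm{Coh}(X))\simeq \mathrm D^b(\mathrm{Coh}_Z(X))$. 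Applying $F$ and then the contravariant functor $\mathrm{RHom}(-,\iota_W^!\cc(p))$ reduces the question to support considerations governed by $V$.

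First I would establish, by induction on $j\in \{0,1,\dots,\dim Z-1\}$, the auxiliary vanishing
\[
\mathrm{RHom}\bigl(F(\iota_Z^!\sO_{Z_j}(n)),\,\iota_W^!\cc(p)\bigr) \;=\; 0 \qquad \text{for every } p\in V \text{ and every } n\in \mathbb Z.
\]
For the base case $j=0$ the scheme $Z_0$ is zero-dimensional, hence $\Pic(Z_0)=0$ and $\sO_{Z_0}(n)\simeq \sO_{Z_0}$ as abstract $\sO_X$-modules; thus $F(\iota_Z^!\sO_{Z_0}(n))\simeq F(\iota_Z^!\sO_{Z_0})$ has support inside $S$, and the inclusion $V\subset W\setminus S$ forces the vanishing. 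For the inductive step, applying $\mathrm{RHom}(-,\iota_W^!\cc(p))$ to the triangle coming from the displayed short exact sequence and invoking the inductive hypothesis to kill the contribution of $\sO_{Z_{j-1}}(n)$ yields an isomorphism
\[
\mathrm{RHom}\bigl(F(\iota_Z^!\sO_{Z_j}(n)),\,\iota_W^!\cc(p)\bigr) \;\simeq\; \mathrm{RHom}\bigl(F(\iota_Z^!\sO_{Z_j}(n-1)),\,\iota_W^!\cc(p)\bigr),
\]
so the RHom is independent of $n$. Evaluating at $n=0$ gives zero, because $p\in V$ and $\mathrm{Supp}(F(\iota_Z^!\sO_{Z_j}))\subset S$ for $j<\dim Z$, completing the induction.

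Finally, I would run the same triangle argument at $j=\dim Z$. The auxiliary vanishing applied to $\sO_{Z_{\dim Z-1}}(n)$ collapses the resulting long exact sequence to an isomorphism
\[
\mathrm{RHom}\bigl(F(\iota_Z^!\sO_Z(m)),\,\iota_W^!\cc(p)\bigr) \;\simeq\; \mathrm{RHom}\bigl(F(\iota_Z^!\sO_Z(m-1)),\,\iota_W^!\cc(p)\bigr)
\]
for every $m\in \mathbb Z$ and $p\in V$. Iterating, the quantity equals its value at $m=0$, which is $\bigoplus_{k=0}^{\dim X} \mathbb C^{\oplus b_k^p}[-k]$ by the defining property of $W'\supseteq V$, yielding the claim. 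The main subtlety I expect is ensuring that the flag is generic enough for each hyperplane section sequence to be genuinely short exact with the claimed kernel $\sO_{Z_j}(-1)$; this is absorbed by the Bertini-type constraints already used to obtain the $Z_j$ irreducible and $x\notin Z_j$ for $j<\dim Z$.
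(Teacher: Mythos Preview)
Your proof is correct and follows essentially the same route as the paper: both use the hyperplane-section short exact sequences $0 \to \sO_{Z_j}(n-1) \to \sO_{Z_j}(n) \to \sO_{Z_{j-1}}(n) \to 0$ along the flag, the fact that $\sO_{Z_0}(n)\simeq \sO_{Z_0}$, and the definition of $V=W'\setminus S$ to kill the lower-flag contributions. The only cosmetic difference is that you induct on $j$ to prove the auxiliary vanishing $\mathrm{RHom}(F(\iota_Z^!\sO_{Z_j}(n)),\iota_W^!\cc(p))=0$ for all $n$ and $j<\dim Z$ at once, whereas the paper phrases the same thing as a cascading inclusion of supports $\mathrm{Supp}(F(\iota_Z^!\sO_{Z_{\dim Z-1}}(m)))\subset S$; these are equivalent reformulations of the same induction.
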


\begin{proof}
 
For  every $j=1, \ldots , \dim Z$ there is a  short exact sequence:
$$0 \to \sO_{Z_{j}}(-Z_{j-1}) \to \sO_{Z_{j}} \to \sO_{Z_{j-1}} \to 0$$ where  $\sO_{Z_j}(-Z_{j-1})  \simeq  \sO_{Z_j}(-1) = \sO_{Z_j} \otimes 
\sO_{{\mathbb P}^N} (-1)$. 
We set $$\sO_{Z_j}(m) := \sO_{Z_j} \otimes \mathbb \sO_{{\mathbb P}^N}(m)$$ for  $m\in \mathbb Z$ and note the 
 distinguished triangles 
\begin{align*}
\iota_Z^! \sO_{Z_{j-1}} (m+1) [-1] \to \iota_Z^! \sO_{Z_j} (m) \to \iota_Z^! \sO_{Z_j} (m+1) \to \iota_Z^!  \sO_{Z_{j-1}} (m+1)\\
F( \iota_Z^! \sO_{Z_{j-1}} (m+1)  [-1] ) \to  F(\iota_Z^! \sO_{Z_j} (m) ) \to F( \iota_Z^! \sO_{Z_j} (m+1) ) 
\to F( \iota_Z^!  \sO_{Z_{j-1}} (m+1) )
\end{align*}
in $\mathrm{  D}^b_Z(\mathrm{   Coh}(X))$ and $\mathrm{  D}^b_W(\mathrm{   Coh}(Y))$, respectively.
Therefore we find that 
\begin{equation}\label{eq:mpos}
\mathrm{  Supp} ( F(\iota_Z^!  \sO_{Z_j}(m+1)) ) \subset \mathrm{  Supp} ( F(\iota_Z^!  \sO_{Z_j}(m)) ) \cup 
 \mathrm{  Supp} (F(\iota_Z^!  \sO_{Z_{j-1}}(m+1)) )
\end{equation} 
  and
\begin{equation}\label{eq:mneg}
 \mathrm{  Supp} ( F(\iota_Z^!  \sO_{Z_j}(m)) ) \subset \mathrm{  Supp} ( F(\iota_Z^!  \sO_{Z_j}(m+1))  )
 \cup \mathrm{  Supp} (F(\iota_Z^!  \sO_{Z_{j-1}}(m+1)) ) .
\end{equation}

Now note that $\iota_Z^! \sO_{Z_0}(m) \simeq \iota_Z^! \sO_{Z_0}$ for all $m$. 
By proceeding by induction on $m$, and by employing \eqref{eq:mpos} for $m\geq 0$ and \eqref{eq:mneg} for $m<0$, we conclude that 
\begin{align*}
\mathrm{  Supp} ( F(\iota_Z^!  \sO_{Z_{\dim Z-1}}(m)) ) \subset \\
 \mathrm{  Supp} ( F(\iota_Z^!  \sO_{Z_{\dim Z-1}}(m - 1)) ) \cup \mathrm{  Supp} ( F(\iota_Z^!  \sO_{Z_{\dim Z-2}}(m)) ) \subset  \\
\mathrm{  Supp} ( F(\iota_Z^!  \sO_{Z_{\dim Z-1}}(m-2))) \cup \mathrm{  Supp} ( F(\iota_Z^!  \sO_{Z_{\dim Z-2}}(m - 1)) )  \cup 
\mathrm{  Supp} ( F(\iota_Z^!  \sO_{Z_{\dim Z-3}}(m )) ) \\ \subset  \ldots \subset  
 \bigcup_{0\leq j<\dim Z} \mathrm{  Supp} ( F(\iota_Z^!  \sO_{Z_j})).       
\end{align*}

Finally  let $p\in V$ be any point closed point and 
consider the short exact  sequence $$0\to \sO_Z(-1) \to \sO_Z \to \sO_{Z_{\dim Z-1}} \to 0,$$ together with  the distinguished triangle
$$F(\iota_Z^! \sO_Z (-1) ) \to  F(\iota_Z^!  \sO_Z ) \to F( \iota_Z^! \sO_{Z_{\dim Z-1}} ) \to F(\iota_Z^! \sO_Z (-1) ) [1].$$
Note that 
$$\mathrm{  RHom} ( F( \iota_Z^!  \sO_{Z_{\dim Z-1}}) , \iota_W^! \cc(p)) = 0$$ as $p\notin S$. 
 Therefore we obtain the isomorphisms
$$\mathrm{  RHom} ( F( \iota_Z^!  \sO_{Z} (-1) ) , \iota_W^! \cc(p)) \simeq \mathrm{  RHom} ( F( \iota_Z^!   \sO_{Z}) , \iota_W^! \cc(p))
\simeq \bigoplus_{k=0}^{\dim X} \mathbb C^{\oplus b^p_k}[-k].$$ 
By running an induction argument on $m$ we get the statement. 
\end{proof}

\subsection{Step 3}
       
For every closed point $p\in V$  there exists an object $E_p$ in $\mathrm{  D}_Z^b(\mathrm{   Coh}(X))$ such that 
$F(E_p) = \iota_W^! \cc(p)$. Moreover, by Proposition \ref{prop:tensorm},   we have
$$\mathrm{  RHom} (\iota_Z^!\sO_Z(m)  , E_p  ) \simeq \mathrm{  RHom} ( F(\iota_Z^! \sO_Z(m) ) , \iota_W^! \cc(p)) \simeq 
\bigoplus_{k=0}^{\dim X} \mathbb C^{\oplus b^p_k}[-k]$$
for all $m$. Hence    by Proposition \ref{prop:charstr} $E_p$ is a point like object and 
 there is an isomorphism  $E_p \simeq \iota_Z^! \cc(q)[r_p]$ for some   $q\in Z(\cc)$ and $r_p\in \mathbb Z$. 

Define the   subset $$Z(\cc) \supset U(\cc) : = \{ q \; \big| \; F(\iota_Z^! \cc(q) ) =
 \iota_W^!\cc(p)[s_p]  \mbox{ for some }p \in V(\cc) \mbox{ and }s_p\in \mathbb Z \}$$
 and  the function $f\colon U(\cc) \to V(\cc)$  such that $f(q)=p$ if and only if $F(\iota_Z^!\cc(q)) = \iota_W^! \cc(p)[s_p]$. 
 Note that $f$ is injective and surjective.
  
\begin{prop}
The set $U(\cc)$ is open in $Z(\cc)$.
\end{prop}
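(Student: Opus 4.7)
My plan is to exhibit $U(\cc)$ as the complement in $Z(\cc)$ of a closed algebraic subset of $Z$. Let $W_0:=W\setminus V$, endowed with its reduced closed subscheme structure, and denote by $i\colon W_0\hookrightarrow Y$ the closed immersion. For each $m\in\mathbb{Z}$ set
\[
T_m\;:=\;F^{-1}\!\big(\iota_W^!(i_*\sO_{W_0}(m))\big)\;\in\;\mathrm{D}_Z^b(\mathrm{Coh}(X)),
\]
and let $C:=\bigcup_{m\in\mathbb{Z}}\mathrm{Supp}(T_m)\subset Z$. I will show $Z(\cc)\setminus U(\cc)=C\cap Z(\cc)$. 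Since $Z$ is Noetherian, any union of closed subsets actually equals a finite subunion and is therefore closed, so $C$ is a closed algebraic subset of $Z$ and $U(\cc)$ is open in $Z(\cc)$.

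The key structural input is a support dichotomy: for every closed point $q\in Z(\cc)$, either $q\in U(\cc)$ and $\mathrm{Supp}(F(\iota_Z^!\cc(q)))=\{f(q)\}\subset V$, or $q\notin U(\cc)$ and $\mathrm{Supp}(F(\iota_Z^!\cc(q)))\subset W_0$. Indeed, if some $p\in V$ lay in $\mathrm{Supp}(F(\iota_Z^!\cc(q)))$, then $\mathrm{RHom}(F(\iota_Z^!\cc(q)),\iota_W^!\cc(p))\neq 0$; substituting $F^{-1}(\iota_W^!\cc(p))\simeq\iota_Z^!\cc(f^{-1}(p))[r_p]$ from Step 3 and invoking orthogonality of distinct skyscrapers on $X$ forces $q=f^{-1}(p)\in U(\cc)$.

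The inclusion $U(\cc)\cap\mathrm{Supp}(T_m)=\emptyset$ for every $m$ is immediate: for $q\in U(\cc)$ the adjunction gives
\[
\mathrm{RHom}(T_m,\iota_Z^!\cc(q))\simeq \mathrm{RHom}(i_*\sO_{W_0}(m),\cc(f(q)))[s_{f(q)}]=0,
\]
since $f(q)\in V$ is not in $W_0=\mathrm{Supp}(i_*\sO_{W_0}(m))$. For the reverse inclusion, fix $q\notin U(\cc)$ and set $G:=\iota_W F(\iota_Z^!\cc(q))$, a nonzero object of $\mathrm{D}^b(\mathrm{Coh}(Y))$ with support contained in $W_0$. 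Let $j_0:=\min\{j\mid\sH^j(G)\neq 0\}$ and pick a closed point $p\in\mathrm{Supp}(\sH^{j_0}(G))$. The stalk $\sH^{j_0}(G)_p$ is a nonzero finitely generated $\sO_{Y,p}$-module killed by some power $\mathcal{I}_p^N$ of the ideal of $W_0$; choosing $N\geq 1$ minimal, any nonzero element of $\mathcal{I}_p^{N-1}\sH^{j_0}(G)_p$ is annihilated by $\mathcal{I}_p$ and so produces a nonzero element of $\mathrm{Hom}_{\sO_{Y,p}}(\sO_{W_0,p},\sH^{j_0}(G)_p)$. The vanishing $\sH^b(G)=0$ for $b<j_0$ makes the hyperext spectral sequence collapse in total degree $j_0$, so $\mathcal{E}xt^{j_0}(i_*\sO_{W_0},G)\simeq\mathcal{H}om(i_*\sO_{W_0},\sH^{j_0}(G))$ is a nonzero coherent sheaf on $Y$.

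Serre vanishing then provides, for $m\gg 0$ (depending on $q$), both the higher-cohomology vanishing $H^{>0}(\mathcal{E}xt^j(i_*\sO_{W_0},G)\otimes\sO_Y(m))=0$ for every $j$ and the non-vanishing $H^0(\mathcal{E}xt^{j_0}(i_*\sO_{W_0},G)\otimes\sO_Y(m))\neq 0$; degeneration of the local-to-global Ext spectral sequence upgrades this to $\mathrm{Ext}^{j_0}(i_*\sO_{W_0},G\otimes\sO_Y(m))\neq 0$, equivalently $\mathrm{RHom}(i_*\sO_{W_0}(-m),G)\neq 0$. Pushing back through $F$ via adjunction yields $\mathrm{RHom}(T_{-m},\iota_Z^!\cc(q))\neq 0$, so $q\in\mathrm{Supp}(T_{-m})\subset C$. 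The main obstacle is this final step---passing from an essentially stalk-level $\mathrm{Hom}$ non-vanishing to a global $\mathrm{RHom}$ non-vanishing---which forces the twist by $\sO_Y(m)$; the Noetherianity of $Z$ then collapses the infinite union $C$ into a finite one so that $C$ is genuinely a closed subset.
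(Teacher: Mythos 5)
Your overall strategy---exhibit $Z(\cc)\setminus U(\cc)$ as the trace on $Z(\cc)$ of a closed subset of $Z$---is the same as the paper's, and the intermediate steps (the support dichotomy, the stalk-level argument producing a nonzero local $\mathrm{Hom}$, the local-to-global Ext spectral sequence together with Serre vanishing giving $\mathrm{RHom}(T_{-m},\iota_Z^!\cc(q))\neq 0$ for $q\notin U(\cc)$ and $m\gg 0$) are sound. The gap is the very last step: the claim that $C=\bigcup_{m\in\mathbb Z}\mathrm{Supp}(T_m)$ is closed because \say{since $Z$ is Noetherian, any union of closed subsets actually equals a finite subunion.} That is false. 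A Noetherian topological space satisfies the \emph{descending} chain condition on closed subsets, not the ascending one, so an infinite (even countable) union of closed sets need not equal a finite subunion nor be closed---already in $\mathbb A^1$ the ascending chain $\{0\}\subset\{0,1\}\subset\{0,1,2\}\subset\cdots$ never stabilizes and its union is not closed. Since your argument only exhibits $q\in\mathrm{Supp}(T_{-m})$ for some $m\gg 0$ \emph{depending on} $q$, there is no a priori uniform bound, and your $C$ could in principle be a genuinely infinite strictly increasing union; the openness of $U(\cc)$ does not follow.

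The paper avoids exactly this issue by replacing the infinite family of objects $\iota_W^!(i_*\sO_{W_0}(m))$ with a single classical generator $G_C$ of $\mathrm{D}^b_C(\mathrm{Coh}(Y))$ for $C:=W\setminus V$, whose existence is Rouquier's theorem. Because every object of $\mathrm{D}^b_C(\mathrm{Coh}(Y))$ is built from $G_C$ by finitely many shifts, cones and direct summands, \cite[Lemma 2.3]{LLZ} yields the uniform inclusion $\mathrm{Supp}(F^{-1}(E))\cap Z(\cc)\subset\mathrm{Supp}(F^{-1}(G_C))\cap Z(\cc)$ for all $E$; combined with the support dichotomy (which you correctly formulate and which the paper uses implicitly via Serre duality), one then gets $Z(\cc)\setminus U(\cc)=\mathrm{Supp}(F^{-1}(G_C))\cap Z(\cc)$, a single closed set. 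Your proof could be repaired by invoking this same bound to trap all the $\mathrm{Supp}(T_m)$ inside $\mathrm{Supp}(F^{-1}(G_C))$, but at that point the Serre-vanishing and twist computation becomes superfluous and the argument collapses to the paper's.
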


\begin{proof}

Define the closed set  $C = W \backslash V$ and let $G_C$ be a classical generator of $\mathrm{  D}^b_C ( \mathrm{   Coh} (Y))$ (\cite[Theorem 6.8]{R}). 
Moreover let $F^{-1}$  be  a  quasi-inverse to $F$.
Then for any $q\in U(\cc)$ we have
$$\mathrm{  RHom} (F^{-1}(G_C) , \iota_Z^! \cc(q))   \simeq  \mathrm{  RHom} (G_C , F (\iota_Z^! \cc(q) ) )  \simeq  0$$
and therefore $\mathrm{  Supp} (F^{-1}(G_C) ) \cap Z(\cc) \subset Z(\cc)\backslash U(\cc)$.

Now we show the reverse inclusion. 
By \cite[Lemma 2.3]{LLZ},  the following inclusion holds for any object $E$ in $\mathrm{  D}^b_C ( \mathrm{   Coh}(Y))$:
\begin{equation}\label{eq:inclzu}
\mathrm{  Supp} ( F^{-1}(E) ) \cap Z(\cc)  \subset  \mathrm{  Supp} ( F^{-1}(G_C) ) \cap Z(\cc) .
\end{equation}
Let $q' \in Z(\cc) \backslash U(\cc)$ and  $p'\in \mathrm{  Supp} ( F ( \iota_Z^! \cc(q') ) ) \cap Z(\cc)$.
Then  Serre duality  yields
\begin{align*}
\mathrm{  RHom} ( F^{-1} ( \iota_W^! \cc(p') ) , \iota_Z^!  \cc(q') ) \simeq \\
 \mathrm{  RHom} (  \iota_W^! \cc(p') , F ( \iota_Z^!  \sO_{q'} ) ) \simeq \\
  \mathrm{  RHom} (  F ( \iota_Z^!  \cc(q') ), S_W (  \iota_W^! \cc(p') )   )^*
\simeq \\
 \mathrm{  RHom} (  F ( \iota_Z^!  \cc(q') ), \iota_W^! \cc(p')[\dim Y] )   )^*\neq 0.
\end{align*}
%Since $p'\in C = {\rm supp}(G_C)$, by \eqref{eq:inclzu}
Hence $q' \in
\mathrm{  Supp} (F^{-1} (\iota_W^! \cc(p') ) ) \cap Z(\cc)$ and by \eqref{eq:inclzu}  it must be  $q'\in  \mathrm{  Supp} ( F^{-1} (G_C) ) \cap Z(\cc)$. 
 It follows that $Z(\cc)\backslash U(\cc) = \mathrm{  Supp} ( F^{-1} (G_C) ) \cap Z(\cc)$ 
is a closed set in $Z(\cc)$.
\end{proof}

\begin{prop}\label{prop:cont}
The functions $f$ and $f^{-1}$ are continuous.
\end{prop}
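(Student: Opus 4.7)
The plan is to adapt, with minor bookkeeping adjustments, the continuity argument from the proof of Theorem \ref{thm:intr1} in Section 5. The key mechanism is that for any $q\in U(\cc)$, the Hom-adjunction between $\iota_W$ and $\iota_W^!$, together with the defining relation $F(\iota_Z^!\cc(q))\simeq \iota_W^!\cc(f(q))[s_{f(q)}]$, converts the condition $f(q)\in W'$ into a support condition on $F^{-1}(\iota_W^!\sO_{W'})$ at $q$. The only new feature compared to Section 5 is that $f$ is defined on the open subset $U(\cc)\subset Z(\cc)$ and takes values in $V(\cc)\subset W(\cc)$, so we work with the subspace topologies: a closed subset of $V(\cc)$ (respectively $U(\cc)$) has the form $W'\cap V(\cc)$ (respectively $Z'\cap U(\cc)$) for some algebraic closed $W'\subset W$ (respectively $Z'\subset Z$).

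For the continuity of $f$, fix an algebraic closed $W'\subset W$ and compute, for every $q\in U(\cc)$:
\begin{align*}
\mathrm{RHom}(F^{-1}(\iota_W^!\sO_{W'}),\iota_Z^!\cc(q)) &\simeq \mathrm{RHom}(\iota_W^!\sO_{W'}, F(\iota_Z^!\cc(q))) \\
&\simeq \mathrm{RHom}(\sO_{W'},\cc(f(q))[s_{f(q)}]).
\end{align*}
This is nonzero precisely when $f(q)\in W'$, so
$$f^{-1}(W'\cap V(\cc)) \;=\; \mathrm{Supp}(F^{-1}(\iota_W^!\sO_{W'}))\cap U(\cc),$$
which is closed in $U(\cc)$. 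For the continuity of $f^{-1}$, the dual computation yields
$$\mathrm{RHom}(F(\iota_Z^!\sO_{Z'}),\iota_W^!\cc(f(q))[s_{f(q)}]) \;\simeq\; \mathrm{RHom}(\sO_{Z'},\cc(q))$$
for every $q\in U(\cc)$ and every algebraic closed $Z'\subset Z$, so $f(Z'\cap U(\cc)) = \mathrm{Supp}(F(\iota_Z^!\sO_{Z'}))\cap V(\cc)$ is closed in $V(\cc)$.

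I do not anticipate substantive obstacles: both steps are direct adjunction manipulations strictly parallel to those in Section 5 and in the proof of Theorem \ref{thm:recabelian}. The only points requiring care are that $\iota_W^!\sO_{W'}$ and $\iota_Z^!\sO_{Z'}$ genuinely lie in the relevant supported derived categories on which $F^{\pm 1}$ act (which holds by Proposition \ref{prop:formulas}(i), since $\sO_{W'}$ is supported in $W$ and $\sO_{Z'}$ is supported in $Z$), and that one intersects systematically with $U(\cc)$ and $V(\cc)$ rather than with $Z(\cc)$ and $W(\cc)$ to respect the subspace topologies.
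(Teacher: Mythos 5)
Your proposal is correct and follows essentially the same approach as the paper's proof: both use the adjunction between $\iota_W$ and $\iota_W^!$ together with the relation $F(\iota_Z^!\cc(q))\simeq\iota_W^!\cc(f(q))[s_{f(q)}]$ to identify $f^{-1}(W'\cap V(\cc))$ with $\mathrm{Supp}\bigl(F^{-1}(\iota_W^!\sO_{W'})\bigr)\cap U(\cc)$, and dually for $f^{-1}$. The only cosmetic difference is that the paper parametrizes closed subsets of $V(\cc)$ and $U(\cc)$ by passing to closures $\overline{V_0}\subset W$ and $\overline{M}\subset Z$, whereas you start directly from algebraic closed $W'\subset W$ and $Z'\subset Z$; the two descriptions of the subspace topology are equivalent.
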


\begin{proof}
 
 If $V_0\subset V$ is a Zariski closed subset,  we denote by  $\overline{ V_0}$  its closure in $W$. 
Since for every $q\in U(\cc)$ we have 
$$\mathrm{  RHom}( F^{-1} (\iota_W^! \sO_{\overline{ V_0}} ) ,\iota_Z^! \cc(q))  
\;  \simeq \; \mathrm{  RHom}( \iota_W^! \sO_{\overline{ V_0}} , \iota_W^!\cc(f(q)) [s_{f(q)}]) ),$$ 
then 
$$q \in \mathrm{  Supp} ( F^{-1} (\iota_W^! \sO_{\overline{ V_0}}  ) ) \cap U(\cc) \;  \Leftrightarrow  \; f(q) \in \overline{ V_0}  \cap V(\cc)=  V_0 \cap V(\cc).$$ 
It follows that $f^{-1}(V_0 \cap V(\cc)) = \mathrm{  Supp} (  F^{-1} (\iota_W^! \sO_{\overline{ V_0}} )) \cap U $, which   is closed in $U$.

In a similar manner we prove that   $f^{-1}$ is continuous. 
For every Zariski closed subset $M$ in $U(\cc)$, we denote by $\overline M$ its  closure in $Z$.  Note that for every $q\in U(\cc)$ we have  
$$\mathrm{  RHom}( \iota_Z^! \sO_{\overline M} , \iota_Z^! \cc(q) ) \; \simeq  \; \mathrm{  RHom} (F( \iota_Z^! \sO_{\overline M} ) , 
\iota_W^! \cc(f(q)) [s_{f(q)}]).$$ 
Then it follows that 
$$f(q) \in \mathrm{  Supp}( F( \iota_Z^! \sO_{\overline M} ) ) \cap V (\cc) \; \Leftrightarrow \; q \in \overline M \cap U(\cc) = M$$
and
$f(M) = \mathrm{  Supp}  ( F( \iota_Z^! \sO_{\overline M} ) )  \cap V(\cc)$ is closed in $V(\cc)$.

\end{proof}

 \subsection{Step 4}
  \begin{cor}
 If $F$ is an equivalence and  $\dim W=1$, then $Z(\cc)$ and $W(\cc)$ are homeomorphic.
 \end{cor}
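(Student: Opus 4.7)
The plan is to use the one-dimensional hypothesis on $W$ to extend the homeomorphism $f \colon U(\cc) \to V(\cc)$ of Steps 1--3 to a homeomorphism on all of $Z(\cc)$ and $W(\cc)$, by matching the finite leftover sets $Z \setminus U$ and $W \setminus V$.

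First I would observe that $\dim Z = 1$: since $V$ is open dense in the irreducible $W$ of dimension $1$, it has dimension $1$; the homeomorphism of Proposition \ref{prop:cont} then forces $\dim U = 1$ and hence $\dim Z = 1$ by density. Thus $Z \setminus U$ and $W \setminus V$ are proper closed subsets of the irreducible curves $Z$ and $W$, and consequently finite sets. Next, for each $p \in W(\cc) \setminus V(\cc)$ I would consider $E_p := F^{-1}(\iota_W^! \cc(p))$. The key step is to show that $\iota_Z E_p$ has zero-dimensional support, by excluding $U(\cc)$ from the support via the already-established matching on $U$. For any $q \in U(\cc)$, adjunction yields
\[
\mathrm{RHom}(E_p, \iota_Z^! \cc(q)) \simeq \mathrm{RHom}(\iota_W^! \cc(p), \iota_W^! \cc(f(q))[s_{f(q)}]) \simeq \mathrm{RHom}(\cc(p), \cc(f(q))[s_{f(q)}]) = 0,
\]
since $p \neq f(q)$ (as $p \notin V$ while $f(q) \in V$). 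Hence $\mathrm{Supp}(\iota_Z E_p) \cap Z(\cc) \subset Z(\cc) \setminus U(\cc)$ is finite, so $\iota_Z E_p$ is supported in dimension zero.

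The simplicity of $E_p$ and the vanishing $\mathrm{Hom}(E_p, E_p[k]) = 0$ for $k<0$ are inherited from the skyscraper $\iota_W^! \cc(p)$ via $F^{-1}$, so Lemma \ref{lem:pointlike} applied to $\iota_Z E_p$ yields $E_p \simeq \iota_Z^! \cc(q_p)[r_p]$ for some $q_p \in Z(\cc) \setminus U(\cc)$ and $r_p \in \mathbb Z$. This produces an injection $W(\cc) \setminus V(\cc) \hookrightarrow Z(\cc) \setminus U(\cc)$; the symmetric argument with $F$ in place of $F^{-1}$ (studying $F(\iota_Z^! \cc(q))$ for $q \in Z(\cc) \setminus U(\cc)$ and cutting off its support by testing against $\iota_W^! \cc(p)$ for $p \in V(\cc)$) produces the inverse injection, and the two maps are mutually inverse by a support comparison. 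Gluing with $f$ yields a bijection $\Phi \colon Z(\cc) \to W(\cc)$, which is automatically a homeomorphism because both $Z$ and $W$ are irreducible of dimension $1$ and hence carry the cofinite Zariski topology on closed points. The main obstacle is the support reduction for $\iota_Z E_p$: outside the ample-canonical setting Proposition \ref{prop:pointlikeample} is unavailable, and the argument hinges crucially on combining the bijectivity of $f$ on $U(\cc)$ with the curve structure of $Z$, so that the complement of $U(\cc)$ is too small to accommodate positive-dimensional support.
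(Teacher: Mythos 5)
Your proof is correct and follows the same basic strategy as the paper's: extend the bijection $f\colon U(\cc)\to V(\cc)$ to the finite leftover sets using the one-dimensionality of $Z$ and $W$. There are, however, a few differences worth noting. You explicitly derive $\dim Z=1$ from the homeomorphism $U(\cc)\simeq V(\cc)$ of Proposition \ref{prop:cont}, a step the paper leaves implicit but which is genuinely needed to conclude that the complement of $U$ (resp. $V$) is finite. For the support reduction, you test $E_p=F^{-1}(\iota_W^!\cc(p))$ against \emph{all} skyscrapers $\iota_Z^!\cc(q)$ with $q\in U(\cc)$ to pin $\mathrm{Supp}(\iota_Z E_p)$ inside the finite set $Z(\cc)\setminus U(\cc)$; the paper tests only against the single point $x$ and uses irreducibility to conclude proper containment in $Z$. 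Both work, though the paper's is marginally shorter. To close the bijection you build injections in both directions and check they are mutually inverse, whereas the paper extends $f$ to a set $U'\supset U(\cc)$ and then rules out $Z(\cc)\setminus U'\neq\emptyset$ by a contradiction (a leftover point would force $F(\iota_Z^!\cc(r))$ to have empty support); these are equivalent. Finally, your observation that continuity is automatic because an irreducible curve carries the cofinite Zariski topology on closed points is a genuine simplification over the paper's appeal to the support-comparison argument of Proposition \ref{prop:cont}, and it makes the last step essentially trivial. Overall this is a valid and in places slightly cleaner rendering of the same proof.
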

 
 \begin{proof}
Let $p\in W(\cc)\backslash V(\cc)$ and let $F_p$ be an object in $\mathrm{  D}^b_Z(\mathrm{  Coh}(X))$ such that $ F(F_p) = \iota_W^! \cc(p)$. The
support of $F_p$ is
properly contained in $Z$ as
$$ \mathrm{  RHom}(F_p , \iota_Z^! \cc(x)) \;  \simeq \;  \mathrm{  RHom} ( \iota_W^! \cc(p) , \iota_W^! \cc(y)) =0$$ (note that $y\in V(\cc)$). 
Hence $F_p$ is supported 
in dimension zero and, by Lemma \ref{lem:pointlike}, $F_p \simeq  \iota_Z^! \cc(q)[m_p]$ for some 
$q \in Z(\cc)\backslash U(\cc)$   and $m_p\in \mathbb Z$. Therefore we can extend $f$ on $q$ by setting $f(q)=p$. 
By iterating this process for every $p\in W(\cc) \backslash V(\cc)$, we obtain an injective and 
surjective function $f\colon U' \to W(\cc)$ where $U(\cc)\subset U' \subset Z(\cc)$. 

We show that $U'=Z(\cc)$. Suppose by contradiction  that there exists $r\in Z(\cc)\backslash U'$. Note that for every $q\in U'$:
$$0 \; = \;  \mathrm{  RHom} ( \iota_Z^! \cc(r) , \iota_Z^!\cc(q) )  \; \simeq \;
  \mathrm{  RHom} (F (\iota_Z^! \cc(r)) , F(\iota_Z^!\cc(q)) ) \; \simeq \;  \mathrm{ R Hom} (F (\iota_Z^! \cc(r)) , \iota_W^! \cc(f(q)) ).$$
As $f$ is surjective, this means that $\mathrm{  Supp} ( F(\iota_Z^! \cc(r)) )= \emptyset$ and $ F(\iota_Z^! \cc(r)) \simeq 0$ which is impossible.
Moreover, as in Proposition \ref{prop:cont}, one can prove that both extensions 
$f\colon Z(\cc) \to W(\cc)$ and $f^{-1}\colon W(\cc) \to Z(\cc)$ are continuous.

 \end{proof}

%
%
% \section{Applications to semi-orthogonal decompositions}
% 
% In this section we show that the category ${\rm D}^b_{\{ p\}} ( {\rm Coh}(X))$ ($p\in X$) is not \emph{geometric} in the sense it is not part of any 
% semi-orthogonal decomposition of ${\rm D}^b({\rm coh}(Y))$ for any smooth projective variety $Y$.
% 
% Indeed, if not there would exist a functor $F^* \colon {\rm D}^b({\rm coh}(Y)) \to {\rm D}^b_{\{ p\}} ( {\rm Coh}(X))$ which is left adjoint to the 
% inclusion $F  \colon {\rm D}^b_{\{ p\}} ( {\rm Coh}(X)) \to {\rm D}^b({\rm coh}(X))$. Since $F$ is fully faithful, also $F^*$ is so.
%

\bibliographystyle{amsalpha}
\bibliography{bibl}

\end{document}